\newtheorem{proposition}{Proposition}
\newtheorem{definition}{Definition}
\newtheorem{corollary}{Corollary}
\newtheorem{lemma}{Lemma}
\newtheorem{theorem}{Theorem}
\newtheorem{remark}{Remark}
\DeclareMathOperator*{\argmax}{arg\,max}
\title{Optimal Dynamic Contracts for a Large-Scale Principal-Agent Hierarchy: A Concavity-Preserving Approach} 
\author{Christopher W. Miller\footnotemark[1]
\and
 Insoon Yang\footnotemark[2]
}
\date{}
\begin{document}
\maketitle

\footnotetext[1]{Department of Mathematics, University of California, Berkeley
({miller@math.berkeley.edu}). Supported in part by NSF GRFP under grant number DGE 1106400.}
\footnotetext[2]{Department of Electrical Engineering and Computer Sciences,~University of California,~Berkeley ({iyang@eecs.berkeley.edu}). Supported in part by NSF CPS project FORCES under grant number 1239166.}


\begin{abstract}We present a continuous-time contract whereby a top-level player can incentivize a hierarchy of players below him to act in his best interest despite only observing the output of his direct subordinate. This paper extends Sannikov's approach  from a situation of asymmetric information between a principal and an agent to one of hierarchical information between several players. We develop an iterative algorithm for constructing an incentive compatible contract and define the correct notion of concavity which must be preserved during iteration. We identify conditions under which a dynamic programming construction of an optimal dynamic contract can be reduced to only a one-dimensional state space and one-dimensional control set, independent of the size of the hierarchy. In this sense, our results contribute to the applicability of dynamic programming on dynamic contracts for a large-scale principal-agent hierarchy.\end{abstract}



\pagestyle{myheadings}
\thispagestyle{plain}
\markboth{C. Miller and I. Yang}{Optimal Dynamic Contracts for a Principal-Agent Hierarchy}

\section{Introduction}

A principal-agent problem is a problem of optimal contracting between two parties in an uncertain environment. One player, namely the agent, may be able to influence the value of the output process with his actions. A separate party, the principal, wants to maximize the output process, which is subject to external noise.

It would be ideal for the principal to monitor the agent and enforce a strategy which is optimal to the principal. This case if often called the \emph{first best}.
However, it is often costly for the principal to monitor the agent's action. In many practical situations, the principal can only observe the output process, while the agent has perfect observations. Therefore, the agent is free to deviate from the action or control suggested by the principal, and the principal may not be able to detect this deviation. For example, the agent can shirk and attribute the resulting low output to noise.

In this setting of asymmetric information, the principal can only enforce an `\emph{incentive compatible}' control strategy upon the agent -- a strategy which maximizes the agent's expected utility given the compensation scheme in the contract. Then, the principal's optimal strategy is to design the combination of a compensation scheme and a recommended control strategy such that $(i)$ the recommended control is optimal to the agent given the compensation scheme, and $(ii)$ this combination maximizes the principal's expected utility. Such a combination constitutes a \emph{contract}, which is provided to the agent. 

This so-called \emph{second best} strategy in principal-agent problems has been one of primary interests in economic studies on contracts \cite{Laffont2002}. In particular, principal-agent problems in continuous-time have attracted considerable attention in economics, finance and engineering (e.g.,\cite{Holmstrom1987, Ou-Yang2003, Yang2014}).  Mathematically, such a principal-agent problem can be considered as a special case of Stackelberg differential games \cite{Long2010}. While many Stackelberg differential game problems remain as a challenge in general \cite{Basar1995}, there have been remarkable advances in solution approaches for principal-agent problems.

In Sannikov \cite{Sannikov2008, Sannikov2012},  a  dynamic programming approach is developed.  The key idea is to introduce another state variable, called the agent's \emph{continuation value}, which represents the agent's expected future utility conditioned upon the information up to current time. This approach is generalized  in \cite{EvansMiller2015} to handle more complicated dynamics of the output process and multiple agents. Another stream of research utilizes the stochastic maximum principle \cite{Williams2009, Cvitanic2009, CvitanicZhang2013}. This method requires us to solve an associated forward-backward stochastic differential equation, which is  a non-trivial task (e.g., \cite{Antonelli1993, Peng1999, Ma1999}).

In this paper, we consider a hierarchical principal-agent problem with asymmetric information in the form of moral hazard. This means there exists a hierarchy of players, each of which acts as a `principal' to the `agent' below them in the hierarchy.
\[P_0 \rightarrow P_1\rightarrow\cdots\rightarrow P_k\rightarrow P_{k+1}\rightarrow\cdots\rightarrow P_{N}.\]
More precisely, in this hierarchy of players, Player $k$ acts as a principal with Player ${k+1}$ as his agent. Moral hazard refers to the restriction that Player $k$ can only compensate Player ${k+1}$ based upon an observable noisy output, but not upon the actions actually performed by Player ${k+1}$. 
Due to usefulness of contractual hierarchies in modeling a organizational form of firms, governments and vertically-integrated economies,
several  contracts in a principal-agent hierarchy
have been proposed (e.g., \cite{Qian1994, Melumad1995, McAfee1995}).
However, these studies focus on incentive contracts in static settings.
A recent work \cite{Sung2015} extends Holmstrom and Milgrom's optimal linear contract \cite{Holmstrom1987} to the case of discrete-time hierarchical contracting by using first-order conditions.
To the authors' best knowledge, our work first develops a dynamic programming solution approach for designing optimal continuous-time \emph{dynamic} contracts in a principal-agent hierarchy.

The main result in continuous-time moral hazard problems between two players is that the principal can provide a payment scheme which incentivizes the agent to act in the principal's best interest. In \cite{Sannikov2008, Ekeland2014}, the success of these payment schemes depends upon certain concavity conditions on the agent's utility. The particular conditions and how to proceed when they are not met were emphasized in \cite{EvansMiller2015}.

The main idea of this paper is to construct a payment scheme for the hierarchical principal-agent model, whereby Player 0 can incentivize all players below him to act in his best interest. At a high level, the idea is to think of all players below Player 0 as an `aggregate agent' and inductively apply a generalized two player principal-agent result.
\[\begin{array}{ccc}
\text{Principal} & \rightarrow & \text{Aggregate Agent} \\
P_0 & \rightarrow & \framebox[6.5cm]{$P_1\rightarrow\cdots\rightarrow P_k\rightarrow P_{k+1}\rightarrow\cdots\rightarrow P_{N}$}
\end{array}\]

The first contribution of this work is to illustrate how Player 0 can construct an optimal dynamic contract which incentivizes the entire hierarchy to act in his best interest. Compared to \cite{Sannikov2008}, this result allows Player 0 to indirectly incentivize even those players whose output he cannot directly observe. Specifically, Player $k$ can only monitor the output of Player $k+1$ for $k=0, 1, \cdots, N-1$.
The main technical issue in this result is that, in general, the effective utilities of each player in the hierarchy will not be concave. To restore concavity, we develop a new concept called $(f, \Gamma)$-concave envelopes, which play an essential role to design an optimal dynamic contract via dynamic programming.

Secondly, this work illustrates how our hierarchical principal-agent model can resolve dimensionality issues in dynamic contract problems with multiple agents. Compared the contract constructed by a single principal controlling multiple agents directly in \cite{EvansMiller2015}, Player 0 only monitors one continuation value in a dynamic programming solution of his optimization problem. 
Furthermore, the maximization step is performed over a subset of efforts with dimension potentially less than $N$. 
We show how to construct this subset by a novel iterative algorithm.
This iterative construction allows us to characterize a stochastic optimal control problem, which is equivalent to the original contract design problem.
An important feature of this stochastic optimal control problem is that it is one-dimensional and therefore an optimal dynamic contract can be designed by solving an one-dimensional dynamic programming problem.

The final contribution of this work is to characterize conditions under which the subset of efforts which Player 0 can choose is a one-dimensional sub-manifold. In particular, we show that if each effort has a utility function which is strictly concave, then we can parameterize the admissible efforts by a function which we construct. In this case, we show Player 0 can construct an optimal dynamic contract by solving an associated Hamilton-Jacobi-Bellman (HJB) equation with only one-dimensional state space and one-dimensional admissible controls. We explicitly demonstrate this HJB characterization in the case of agents with quadratic utilities.

The rest of this paper is organized as follows. In Section \ref{sec:setting}, we provide the mathematical setting of the hierarchical principal-agent problem considered in this paper. In Section \ref{sec:gen}, we define a generalized two player principal-agent problem and provide an optimal dynamic contract for the principal. This contract is iteratively constructed in Section \ref{sec:main} to provide an optimal dynamic contract for the hierarchical principal-agent problem. In Section \ref{sec:envelope} we consider more details about the main concavity property of each agent's utility which much be preserved during iteration, called $(f, \Gamma)$-concavity. In particular, we prove a major result related to dimensionality reduction. Finally, in Section \ref{sec:ex}, we consider a specific example and illustrate the steps involved in constructing an optimal dynamic contract for the hierarchical model.

\section{Problem Formulation}\label{sec:setting}

Consider a hierarchy of $N+1$ players, each with a principal-agent relationship. We summarize the information asymmetry in the following diagram:
\[P_0 \rightarrow P_1\rightarrow\cdots\rightarrow P_k\rightarrow P_{k+1}\rightarrow\cdots\rightarrow P_{N}.\]
Note that  Player $k$ can only monitor the output of Player $k+1$ for $k=0, 1, \cdots, N-1$. Because the output is affected by external noise, Player $k$ is not able to correctly infer Player $k+1$'s action. For example, if output is low, Player $k$ cannot deduce if it is because Player $k+1$ performed little effort or due to bad luck. Each player provides payment to the player below him in the hierarchy, but the payment can only depend upon observable outputs and not the actual effort performed. 

The goal of this paper is to define a payment mechanism whereby Player 0 can incentivize all players below him to act in a prescribed manner, even though Player 0 cannot directly observe their actions. The proposed payment mechanism is in the form of a contract between the principal, Player $0$, and the aggregate agent, Players $1, \cdots, N$. The contract consists of a compensation scheme and a recommended control strategy for each agent, i.e., Player $k$ for $k=1, \cdots, N$. These two components must be designed before the contract starts. Player $0$'s interest is to design a contract such that $(i)$ given the compensation scheme each agent follows the recommended control strategy, and $(ii)$ this combination maximizes Player $0$'s expected utility.

\subsection{Mathematical Setup}

We consider a continuous-time system representing the output of each player up to a terminal time, $T>0$. Let $\left(\Omega,\mathcal{F},\mathbb{P}\right)$ be a probability space supporting a standard Brownian motion $B$ with natural filtration $\mathcal{F}^B(t)$. We also consider output processes $X_k$ with dynamics given below. We denote by $\mathcal{F}^{X_k}(t)$ the filtration generated by the process $X_k$.

The dynamics of the system are as follows:
\begin{itemize}
\item Player $N$ chooses an effort process $A_N$ which is adapted to $\mathcal{F}^B$. The output of Player $N$ is a process defined by
\[dX_{N} = A_{N} dt + \sigma dB.\]
\item For  $k = 1, \cdots, N-1$, Player $k$ chooses an effort process $A_k$ adapted to $\mathcal{F}^{X_{k+1}}$ and terminal compensation $R_{k+1}$ which is $\mathcal{F}^{X_{k+1}}(T)$-measurable. The output of Player $k$ is a process driven by
\begin{equation} \label{stoch}
\begin{split}
dX_k &= A_k dt + dX_{k+1}\\
& = \left(A_k + \cdots + A_N\right)dt + \sigma dB.
\end{split}
\end{equation}
Intuitively, $X_k$ represents the cumulative output from effort by Player $k$ and those below in the hierarchy.
\item Player 0 chooses a terminal compensation scheme $R_1$ which is $\mathcal{F}^{X_1}(T)$-measurable.
\end{itemize}

The utilities of the players are as follows:
\begin{itemize}
\item Player $N$ has some cost for effort, and receives a terminal compensation from $P_{N-1}$:
\[J^{N}\left[A_{N}, R_{N}\right] := \mathbb{E}^A\left[ \int_0^T r^{N}\left(A_{N}\right) dt + R_{N}\right].\]
\item For  $k = 1, \cdots, N-1$, Player $k$ has some cost for effort, receives a terminal compensation from Player ${k-1}$, and pays a terminal compensation to Player ${k+1}$:
\[J^k\left[A_{k:N},R_k,R_{k+1}\right] := \mathbb{E}^A\left[ \int_0^T r^k\left(A_k\right) dt + R_k + q^k\left(R_{k+1}\right)\right].\]
Note that the Player $k$'s utility can depend on  $A_{k+1:N}$  as well as $A_k$ because $A_{k}$ is adapted to $F^{X_{k+1}}$ and $X_{k+1}$ is driven by \eqref{stoch} which depends on $A_{k+1:N}$.
In other words, the utility for Player $k$ depends upon effort processes for Player $k+1$ through Player $N$ because these processes affect the distribution of $X_{k+1}$, which affects the distribution of $A_k$ directly.

\item Player 0 
has utility from the output $X_N$ and pays a terminal compensation to Player $1$:
\begin{equation} \nonumber
\begin{split}
J^0\left[A_{1:N},R_1\right] &:= 
\mathbb{E}^A\left[ \int_0^T dX_1 + q^0\left(R_1\right)\right]\\
&\; = \mathbb{E}^A\left[ \int_0^T A_1+\cdots + A_{N} dt + q^0\left(R_1\right)\right].
\end{split}
\end{equation}
\end{itemize}
The notation $\bold{A}_{k:N}$ represents the tuple $(\bold{A}_k,\cdots, \bold{A}_{N})$. When $k=1$, we will often use the notation $\bold{A}$ to denote the tuple $(\bold{A}_1,\cdots, \bold{A}_{N})$. The notation $\mathbb{E}^A$ clarifies that we are taking expectation under a measure where the dynamics of $X_k$ are evolved via the choice of effort $A$.
The set of admissible control for Player $k$ is given by
\[\mathcal{A}_k := \{ \bold{A}_k: [0,T] \to \mathbb{R} \: | \: \bold{A}_k(t) \mbox{ is $\mathcal{F}^{X_{k+1}}(t)$-progressively measurable} \}\]
for $k=1, \cdots, N-1$ and $\mathcal{A}_{N} := \{ \bold{A}_{N}: [0,T] \to \mathbb{R} \: | \: \bold{A}_N(t) \mbox{ is $\mathcal{F}^B(t)$-progressively measurable} \}$.
The set of admissible  compensation for Player $k$ is given by
\[\mathcal{R}_k := \{ \bold{R}_k \in \mathbb{R} \: | \: \bold{R}_k \mbox{ is $\mathcal{F}^{X_k}(T)$-measurable} \}\]
for $k=2, \cdots, N$ and $\mathcal{R}_1 := \{ \bold{R}_1 \in \mathbb{R} \: | \: \bold{R}_1 \mbox{ is $\mathcal{F}^{X_1}(T)$-progressively measurable} \}$. We also let $\mathcal{A} := \mathcal{A}_1\times\cdots\times\mathcal{A}_{N}$. Analogous notation is used for $\mathcal{R}$.

In this paper we assume that, for each $1\leq k\leq N$, the function $r^k$ is concave and $q^k$ is decreasing and linear. Economically, these assumptions correspond respectively to aversion to uncertainty in effort and risk-neutrality over terminal payments.

\subsection{Economic Problem}

The goal of Player 0 is to prescribe compensation schemes, $(R_1,\cdots ,R_{N})$, which allow him to choose the effort for every player, $(A_1,\cdots ,A_{N})$. In particular, he does this so as to maximize his expected utility.
However, because Player 0 cannot directly observe efforts, the choice must satisfy an \emph{incentive compatibility condition} -- that Player $k$ would not be better off $(i)$ by providing Player $k+1$ a different compensation scheme $R_{k+1}'$ or $(ii)$ by performing a different effort $A_k'$.
Furthermore, Player 0 cannot force other players to accept a contract, so the choice must satisfy an \emph{individual rationality condition}  -- that Player $k$ is paid more than a minimum required to accept the contract. We let $\bold{w}_k \in \mathbb{R}$ be the minimum required utility for Player $k$.

Player 0's optimization problem can then be formulated as follows:
\begin{equation} \label{opt}
\begin{split}
\sup\limits_{A \in \mathcal{A}, R \in \mathcal{R}}\quad  & J^0[A_{1:N},R_1] \\
\mbox{subject to} \quad  & J^1[A_{1:N},R_1,R_2]\geq J^1[A_{1:N}',R_1,R_2'] \quad \forall (A_{1:N}',R_2') \in \mathcal{A}_{1:N} \times \mathcal{R}_2 \\
& J^1[A_{1:N},R_1,R_2]\geq \bold{w}_1 \\
& \hspace{0.5in}\vdots \\
& J^k[A_{k:N},R_k,R_{k+1}]\geq J^k[A_{k:N}',R_k,R_{k+1}']\quad  \forall (A_{k:N}',R_{k+1}') \in \mathcal{A}_{k:N} \times \mathcal{R}_{k+1}\\
& J^k[A_{k:N},R_k,R_{k+1}]\geq \bold{w}_k \\
& \hspace{0.5in}\vdots \\
& J^{N}[A_{N},R_{N}] \geq  J^N[A_{N}',R_{N}]\quad \forall A_{N}' \in \mathcal{A}_{N} \\
& J^{N}[A_{N},R_{N}] \geq \bold{w}_{N},
\end{split}
\end{equation}
where the restrictions represents incentive compatibility and individual rationality conditions for the other players.

In summary, Player 0 proposes a set of recommended effort processes $A\in\mathcal{A}$ and terminal compensations $R\in\mathcal{R}$ in order to maximize his expected utility. The first two constraints refer to Player 1's incentive compatibility and individual rationality respectively -- for fixed $R_1$, the proposed choices must maximize Player 1's expected utility and meet a minimum utility for him to accept the contract. Each pair of constraints after that represent the analogous incentive compatibility and individual rationality constraints for each Player $k$.

As written, Player 0's optimization problem is non-trivial to solve primarily because it is unclear how to deal with the various incentive compatibility constraints. We will show that these constraints can be replaced by restrictions on the form of the terminal payments $R$ and admissible suggested efforts.
Furthermore, the reformulation will be proven to be exact.

\subsection{Main Result}

The main result of this paper is to extend the continuous-time formalism described for two player Stackelberg differential games by Sannikov \cite{Sannikov2008} to this hierarchical setup. We will show that for a particular choice of payment schemes, $(R_1,\cdots ,R_{N})$, the incentive compatibility conditions will automatically be satisfied.

We will show that Player 0's optimization problem \eqref{opt} is equivalent to the following optimization problem:
\[\begin{array}{rl}
\sup\limits_{A \in \mathcal{A}, Y_1 \in \mathcal{Y}}&J^0[A,R_1] \\
s.t. & A(t) \in\Gamma\subseteq\mathbb{R}^{N}, \quad t \in [0,T] \mbox{ a.e.}\\
& Y_1(t) \in(\partial\hat{\phi})\left(A_1(t)+\cdots+A_{N}(t)\right),\quad t \in [0,T] \mbox{ a.e.}
\end{array}\]
for a particular choice of $(R_1, \cdots, R_{N}) \in \mathcal{R}$, a given concave function $\hat{\phi}:\mathbb{R}\to\mathbb{R}$, and a given subset $\Gamma\subseteq\mathbb{R}^{N}$, which will be characterized. Here, $\partial\hat{\phi}$ represents the super-gradient of $\hat{\phi}$, and $\mathcal{Y}  := \{
\bold{Y}: [0,T] \to \mathbb{R} \: | \: \bold{Y}(t) \mbox{ is $\mathcal{F}^{B}(t)$-progressively measurable}
\}$.

The intuition is that Player 0 has an optimal effort process $A$ in mind. Player 0 provides Player 1 with a terminal compensation $R_1$, which incentivizes him to cause the rest of the hierarchy to follow $A = (A_1, \cdots, A_N)$. The function $\hat{\phi}$ represents the effective cost that Player 1 has to cause the hierarchy to follow $A$, and $Y_1$ represents the marginal cost.

There is not an explicit form for $\Gamma$ and $\hat{\phi}$, but the iterative algorithm defined in Theorem \ref{thm:Contract} is constructive. In Section 5, we show that if all players have utilities which are strictly concave, then $\Gamma$ is a one-dimensional manifold.

The main idea of the construction of the contract is to iteratively apply a generalization of a two player principal-agent result. The main technical condition that this generalization features over those in \cite{Ekeland2014,Sannikov2008} is $f$-concavity, which was highlighted in \cite{EvansMiller2015}. When dealing with generalized dynamics for the output process, as in the hierarchical case, $f$-concavity is the key condition an agent's utility must satisfy for a principal to incentivize him to follow a given effort. We will show how this concavity condition can be preserved as we iteratively apply the two player principal-agent result. This process leads to constraints on $(A_1,\cdots ,A_{N})$, which are encoded in the subset $\Gamma$.

This simplified problem is a standard stochastic control problem which may be solved by dynamic programming or, equivalently, by solving an associated Hamilton-Jacobi-Bellman equation. Then, Player 0 can explicitly construct a dynamic contract which is incentive compatible for all players, satisfies all individual rationality constraints, and maximizes his expected utility.

In particular, define the value function of Player 0 as
\[v(\bm{w},t) := \sup_{\substack{A\in \mathcal{A}^*, Y_1 \in \mathcal{Y}^*}}\mathbb{E}^A\left[\int_t^T \sum_{k=1}^{N} A_k(s) ds + q^0\left(R_1\right) \mid W(t)  = \bm{w} \right],\]
where
$\mathcal{A}^* := \mathcal{A} \cap \{\bold{A}: [0,T] \to \mathbb{R} \: | \: \bold{A}(t) \in \Gamma, t \in [0,T] \mbox{ a.e.}\}$, 
$\mathcal{Y}^* := \mathcal{Y} \cap \{\bold{Y}: [0,T] \to \mathbb{R} \: | \: \bold{Y}(t) \in (\partial\hat{\phi})\left( 1^\top A(t) \right), t \in [0,T] \mbox{ a.e.}  \}$, and $W(t)$ solves the following stochastic differential equation:
\[dW(t) = -\mu_1(A(t))dt - \sigma Y(t)dB.\]
The drift function $\mu_1:\mathbb{R}^{N}\to\mathbb{R}$ is defined in the iteration algorithm as a linear combination of each player's utility function. The constant $\sigma$ is the volatility of $X_1$. The interpretation of $W(t)$ is the \emph{continuation value} for Player 1 -- his remaining expected utility from following the proposed effort conditioned on the information up to time $t$.

Then, $v$ is the unique viscosity solution of the following Hamilton-Jacobi-Bellman (HJB) equation:
\[v_t + \sup_{\substack{a\in\Gamma_1 \\y\in(\partial\hat{\phi}_1)(1^\top a)}}\left[\frac{1}{2}y^2\sigma^2 v_{\bm{w}\bm{w}} -\mu_1(a) v_{\bm{w}} + 1^\top a\right] = 0,\]
with terminal condition $v(\bm{w},T) = q(\bm{w})$, where $\Gamma_1 \subseteq \Gamma$ will be characterized in the following sections.
After solving the HJB, Player 0 can then construct an optimal contract by constructing an optimal feedback control $A^*(t)\in\Gamma_1$ and $Y^*(t)\in(\partial\hat{\phi})(1^\top A^*(t))$ starting from the initial value $W(0)$ which will be specified. Furthermore, the optimal feedback control will be adapted to the correct filtrations.
Therefore, the proposed contract is implementable.

\section{Contract for Generalized Principal-Agent Problem} \label{sec:gen}

In this section, we consider a generalized principal-agent problem between two players. We propose a contract and demonstrate its incentive compatibility and optimality. We will ultimately iterate this result in the full principal-agent hierarchy in Section \ref{sec:main}.

Compared to \cite{Sannikov2008,EvansMiller2015}, this problem features generalized dynamics, lack of concavity on the agent's utility, and constraints on what efforts the agent may choose. All three of these features will arise naturally in our iterative construction of a contract for the principal-agent hierarchy.

\subsection{The Setting}

As in the hierarchical model, we consider a continuous-time system with an underlying probability space $(\Omega,\mathcal{F},\mathbb{P})$ supporting a standard Brownian motion $B$ with filtration $\mathcal{F}^B$. There are also two output processes $X_\alpha$ and $X_\pi$ with dynamics defined below. We also take special consideration of the filtrations $\mathcal{F}^{X_\alpha}$ and $\mathcal{F}^{X_\pi}$ which represent those generated by the processes $X_\alpha$ and $X_\pi$ respectively.

In this problem, the single agent chooses an $n$-dimensional effort process $A=(A_1,\cdots ,A_n)$, which is restricted to be $\mathcal{F}^B$-adapted and lie in a prescribed subset $\Gamma\subseteq\mathbb{R}^n$ almost surely. Meanwhile, the principal chooses a one-dimensional effort process $P$, which must be $\mathcal{F}^{X_\alpha}$-adapted, as well as a terminal payment to the agent, $R_\alpha$, which must be $\mathcal{F}^{X_\alpha}(T)$-measurable. The principal also receives a terminal payment, $R_\pi$, which is $\mathcal{F}^{X_\pi}(T)$-measurable -- the interpretation is that this is a utility from a player higher in the hierarchy.

The dynamics of $X_\alpha$ and $X_\pi$ are given by
\begin{equation} \nonumber
\begin{split}
dX_\alpha &= f(A) dt + \sigma dB,\\
dX_\pi &= P dt + dX_\alpha.
\end{split}
\end{equation}

The utilities for each party are given by
\begin{equation} \nonumber
\begin{split}
J^\alpha[A,R_\alpha] & :=  \mathbb{E}^A\left[ \bold{w}_0 + \int_0^T r^\alpha(A)dt + R_\alpha\right],\\
J^\pi[P,A,R_\alpha,R_\pi] & :=  \mathbb{E}^A\left[\int_0^T r^\pi(P,X_\alpha)dt + q^\pi(R_\alpha)+R_\pi\right].
\end{split}
\end{equation}

The principal's optimization problem can then be written as follows:
\begin{equation} \label{opt_gen}
\begin{split}
\sup\limits_{P \in \bar{\mathcal{P}}, A \in \bar{\mathcal{A}}, R_\alpha \in \bar{\mathcal{R}}} \quad & J^\pi[P,A,R_\alpha,R_\pi] \\
\mbox{subject to}  \quad  & J^\alpha[A,R_\alpha] \geq J^\alpha[A',R_\alpha]\quad \forall A'\in \bar{\mathcal{A}}\\
& A(t) \in\Gamma, \quad t \in [0,T] \mbox{ a.e.}\\
& J^\alpha[A,R_\alpha] \geq \bold{w}_\alpha,
\end{split}
\end{equation}
where
$\bar{\mathcal{A}} := \{
\bold{A}: [0,T] \to \mathbb{R} \: | \:
\bold{A}(t) \mbox{ is $\mathcal{F}^B(t)$-progressively measurable}
\}$,
$\bar{\mathcal{P}} := \{
\bold{P}: [0,T] \to \mathbb{R} \: | \:
\bold{P}(t) \mbox{ is $\mathcal{F}^{X_\alpha}(t)$-progressively measurable}
\}$ and
$\bar{\mathcal{R}} := \{
\bold{R} \in \mathbb{R} \: | \:
\bold{R} \mbox{ is $\mathcal{F}^{X_\alpha}(T)$-measurable}
\}$.

Intuitively, this problem represents a principal-agent problem with generalized dynamics and utilities. The agent chooses a multi-dimensional effort process to determine his output process. On the other hand, the principal chooses a one-dimensional effort process and a terminal payment based upon the agent's output process and some external noise.

The requirement that each process be adapted to different filtrations represents the information asymmetry in the problem in the form of moral hazard.
The relationship with our hierarchical problem will be that the principal represents Player $k$, while the agent represents Player $k+1$ controlling the efforts of all those players below him in the hierarchy.

\subsection{$f$-Concavity and $(f,\Gamma)$-Concave Envelopes}

In this subsection, we define the key concavity condition required for the proposed contract to be incentive compatible and optimal.

\begin{definition}Let  $f:\mathbb{R}^m\to\mathbb{R}^n$. We say a function $g:\mathbb{R}^m\to\mathbb{R}$ is \emph{$f$-concave} if there exists a concave function $\phi:\mathbb{R}^n\to\mathbb{R}$ such that
{\sf
\[g(x)=\phi(f(x)) \quad \forall x\in\mathbb{R}^m.\]
}
\end{definition}
This condition was shown in \cite{EvansMiller2015} to be key to incentive compatibility in standard principal-agent problems. In particular, it was shown that if the agent's utility was not $f$-concave, then the principal is restricted to particular values of effort depending on the so-called \emph{$f$-concave envelope} of the agent's utility.

In this paper, as we iterate along the hierarchy, there will always be restrictions on the set of admissible efforts. Therefore, we need a generalized notion of concave envelopes.
\begin{definition}Let $g:\mathbb{R}^m\to\mathbb{R}$ be some function and $\Gamma\subseteq\mathbb{R}^m$. We say $\hat{g}:\mathbb{R}^m\to\mathbb{R}$ is the \emph{$(f,\Gamma)$-concave envelope} of $g$ if
\begin{itemize}
\item $\hat{g}$ is $f$-concave,
\item $\hat{g}(x)\geq g(x)$ for all $x\in\Gamma$, and
\item If $\tilde{g}$ is an $f$-concave function such that $\tilde{g}(x)\geq g(x)$ for all $x\in\Gamma$, then $\tilde{g}(x)\geq\hat{g}(x)$ for all $x\in\Gamma$.
\end{itemize}
When convenient, we may refer to $\hat{g}$ by $\hat{\phi}\circ f$.\end{definition}

We  show in Section 5 how to construct the $(f,\Gamma)$-concave envelope of an arbitrary function. The equivalence $\hat{g}=\hat{\phi}\circ f$ refers to an auxillary concave function $\hat{\phi}$ produced by the construction.

We also explicitly define the touching set to be the subset where $g=\hat{g}$.

\begin{definition}Let $\hat{g}$ be the $(f,\Gamma )$-concave envelope of $g$. We define the corresponding \emph{touching set} to be:
\[\Gamma' := \left\{x\in\Gamma : g(x)=\hat{g}(x)\right\}.\]\end{definition}
An example of the touching set is shown in Figure \ref{fig:touching}.
Finally, we recall the definition of the super-gradient \cite{Boyd2004,Evans2010} of a concave function.
\begin{definition}Let $\phi:\mathbb{R}^n\to\mathbb{R}$ be a concave function. For any $x\in\mathbb{R}^n$, define the \emph{super-gradient} of $\phi$ at $x$ as the following set:
\[\partial\phi(x) := \left\{y\in\mathbb{R}^n : \phi(x)+y^\top \left(x'-x\right)\geq\phi(x')\;\; \forall x'\in\mathbb{R}^n\right\}.\]\end{definition}

\begin{figure}[tb] 
\begin{center}
\includegraphics[width =3.3in]{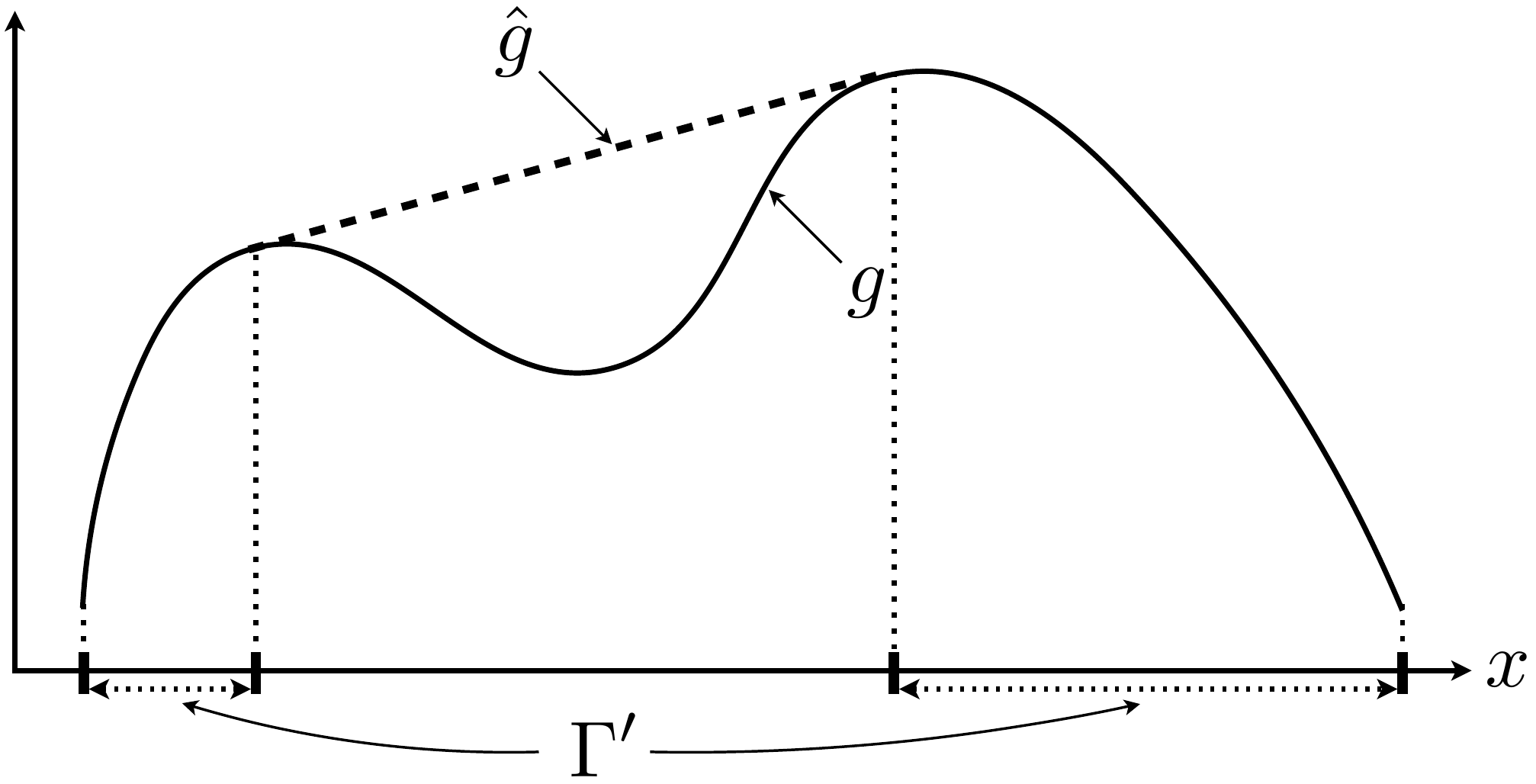}
\caption{$\hat{g}$ is the $(f, \Gamma)$-concave envelope of $g$ when $f(x) = x$ and $\Gamma = \mathbb{R}$. $\Gamma'$ is the corresponding touching set.}
 \label{fig:touching}
 \end{center}
\end{figure}

\subsection{The Contract}

We claim that for any desired choice of $(A_1,\cdots ,A_n)$, there exists a compensation $R_\alpha$ which incentivizes the agent to follow the desired effort. Furthermore, it is optimal for the principal to choose a contract of this particular form.

Let $\hat{r}^\alpha=\hat{\phi}\circ f$ be the $(f,\Gamma)$-concave envelope of $r^\alpha$, and let
\[\Gamma' := \left\{(a_1,\cdots ,a_n)\in\Gamma\mid\hat{r}^\alpha(a_1,\cdots ,a_n)=r^\alpha(a_1,\cdots ,a_n)\right\}\]
be the corresponding touching set.
We also introduce an auxiliary control variable $Y$ which plays an important role in designing a compensation $R_\alpha$ that satisfies the incentive compatibility and individual rationality. 
The set of admissible $Y$'s is given by 
$\bar{\mathcal{Y}} := \{  
\bold{Y}: [0,T] \to \mathbb{R} \: | \: \bold{Y}(t) \mbox{ is $\mathcal{F}^{B} (t)$-progressively measurable}
\}$.

\begin{proposition}Fix $P \in \bar{\mathcal{P}}$. For any choice of processes $A \in \bar{\mathcal{A}}$ such that $A(t) \in\Gamma'$ for $t \in[0,T]$ a.e., and 
$Y \in \bar{\mathcal{Y}}$ such that
$Y(t) \in (\partial\hat{\phi})\left(f(A(t))\right)$ for $t \in[0,T]$ a.e., define
\begin{equation}\label{compensation}
R_\alpha := \bold{w}_\alpha - \bold{w}_0 + \int_0^T f(A)Y - r^\alpha(A) dt - \int_0^T Y dX_\alpha.
\end{equation}
Then, $A$  is incentive compatible with $R_\alpha$, i.e.
\[J^\alpha[A,R_\alpha]\geq J^\alpha[A',R_\alpha]\quad \forall A' \in\bar{\mathcal{A}} \cap \{\bold{A} : [0,T] \to \mathbb{R} \: | \: \bold{A}(t) \in \Gamma, t \in [0,T] \mbox{ a.e.} \},
\]
and also satisfies the agent's individual rationality constraint, i.e.
\[J^\alpha[A,R_\alpha]\geq \bold{w}_\alpha.\]\end{proposition}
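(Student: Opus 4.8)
The plan is to derive both assertions from a single computation: evaluate $J^\alpha[A',R_\alpha]$ for an arbitrary competing effort $A'\in\bar{\mathcal{A}}$ with $A'(t)\in\Gamma$ a.e., show this value never exceeds $\mathbf{w}_\alpha$, and show separately that the proposed effort $A$ attains $\mathbf{w}_\alpha$ exactly. Incentive compatibility and individual rationality then both follow at once, since $J^\alpha[A,R_\alpha]=\mathbf{w}_\alpha\geq J^\alpha[A',R_\alpha]$ for every admissible deviation, and in particular $J^\alpha[A,R_\alpha]=\mathbf{w}_\alpha\geq\mathbf{w}_\alpha$.

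First I would substitute the definition \eqref{compensation} of $R_\alpha$ into $J^\alpha[A,R_\alpha]$. The $\mathbf{w}_0$ terms cancel and the $\pm\int_0^T r^\alpha(A)\,dt$ terms cancel, leaving $\mathbb{E}^A[\mathbf{w}_\alpha+\int_0^T f(A)Y\,dt-\int_0^T Y\,dX_\alpha]$. Under $\mathbb{E}^A$ the output obeys $dX_\alpha=f(A)\,dt+\sigma\,dB$, so $\int_0^T Y\,dX_\alpha=\int_0^T Yf(A)\,dt+\sigma\int_0^T Y\,dB$; the finite-variation parts cancel and the stochastic integral has zero mean, giving $J^\alpha[A,R_\alpha]=\mathbf{w}_\alpha$. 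This already yields the individual rationality constraint, with equality.

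For incentive compatibility I would repeat the substitution but take the expectation $\mathbb{E}^{A'}$ under the measure induced by the deviation $A'$, under which $dX_\alpha=f(A')\,dt+\sigma\,dB^{A'}$ for a $\mathbb{P}^{A'}$-Brownian motion $B^{A'}$ via Girsanov. Since $R_\alpha$ is built from the fixed pair $(A,Y)$, expanding $\int_0^T Y\,dX_\alpha$ with the new drift and discarding the zero-mean martingale $\sigma\int_0^T Y\,dB^{A'}$ reduces the computation to
\[
J^\alpha[A',R_\alpha]=\mathbf{w}_\alpha+\mathbb{E}^{A'}\!\left[\int_0^T\Big(r^\alpha(A')-r^\alpha(A)-Y\big(f(A')-f(A)\big)\Big)dt\right].
\]
It then suffices to show the integrand is pointwise nonpositive, and here the three hypotheses combine. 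Because $A(t)\in\Gamma'$ we have $r^\alpha(A(t))=\hat{\phi}(f(A(t)))$; because $A'(t)\in\Gamma$ and $\hat{r}^\alpha=\hat\phi\circ f$ is the $(f,\Gamma)$-concave envelope of $r^\alpha$ we have $r^\alpha(A'(t))\leq\hat{\phi}(f(A'(t)))$; and because $Y(t)\in(\partial\hat{\phi})(f(A(t)))$, evaluating the super-gradient inequality at $x'=f(A'(t))$ gives $\hat{\phi}(f(A'(t)))-\hat{\phi}(f(A(t)))\leq Y(t)\big(f(A'(t))-f(A(t))\big)$. Chaining these three inequalities bounds the integrand above by $0$, whence $J^\alpha[A',R_\alpha]\leq\mathbf{w}_\alpha$, completing the argument.

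The step I expect to be the main obstacle is the measure-change bookkeeping rather than the concavity estimate, which is just a one-line pointwise inequality. Making the weak formulation precise so that $\mathbb{E}^{A'}$ is legitimate, verifying that $R_\alpha$ is integrable and that the Girsanov density is a genuine (not merely local) martingale via a Novikov- or linear-growth-type condition on $f$ and $Y$, and confirming that $\sigma\int_0^T Y\,dB^{A'}$ is a true martingale with vanishing expectation, are where care is required. The adaptedness of $Y$ to $\mathcal{F}^B$ is what guarantees it remains an admissible integrand after the change of measure, so the stochastic-integral terms drop out cleanly in both evaluations.
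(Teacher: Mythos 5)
Your proposal is correct and follows essentially the same route as the paper's proof: substitute the definition of $R_\alpha$, change measure via Girsanov so that $dX_\alpha=f(A')\,dt+\sigma\,dB^{A'}$, drop the zero-mean stochastic integral, and bound the remaining integrand by chaining the touching-set identity $r^\alpha(A)=\hat{\phi}(f(A))$, the envelope inequality $r^\alpha(A')\leq\hat{\phi}(f(A'))$ on $\Gamma$, and the super-gradient inequality for $Y$. The only differences are cosmetic (you establish $J^\alpha[A,R_\alpha]=\mathbf{w}_\alpha$ first and note the integrability caveats that the paper leaves implicit), so no further comparison is needed.
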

\begin{proof}
Suppose that the agent instead follows an arbitrary choice of effort $A'=(A_1',\cdots ,A_n') \in\bar{\mathcal{A}} \cap \{\bold{A} : [0,T] \to \mathbb{R} \: | \: \bold{A}(t) \in \Gamma, t \in [0,T] \mbox{ a.e.} \}$. Then, we compute
\begin{equation} \nonumber
\begin{split}
J^\alpha[A',R_\alpha] & :=  \mathbb{E}^{A'}\left[ \bold{w}_0 + \int_0^T r^\alpha(A')dt + R_\alpha\right]\nonumber\\
& =  \bold{w}_\alpha + \mathbb{E}^{A'}\left[\int_0^T r^\alpha(A')-\hat{r}^\alpha(A)+f(A)Y dt-\int_0^T Y dX_\alpha\right]\nonumber\\
& \leq  \bold{w}_\alpha + \mathbb{E}^{A'}\left[\int_0^T \hat{r}^\alpha(A')-\hat{r}^\alpha(A) +Y \left[f(A)-f(A')\right]dt\right]\nonumber\\
& \leq  \bold{w}_\alpha.\nonumber
\end{split}
\end{equation}
The key in this argument  is that if the agent follows $A'$ instead of $A$, then the process $X_\alpha$ evolves as
\[dX_\alpha = f(A')dt + \sigma dB^{A'},\]
where $B^{A'}$ is a $\mathbb{P}^{A'}$-Brownian motion. The existence of the new probability measure $\mathbb{P}^{A'}$ is guaranteed by the Girsanov theorem (e.g., \cite{Karatzas1991}). The rest follows from concavity using $A(t) \in\Gamma'$, $\hat{r}^\alpha\geq r^\alpha$, $\hat{r}^\alpha=\hat{\phi}\circ f$, and the definition of $\partial\hat{\phi}$. 

If the agent follows the suggested effort $A=(A_1,\cdots ,A_n)\in\bar{\mathcal{A}} \cap \{\bold{A} : [0,T] \to \mathbb{R} \: | \: \bold{A}(t) \in \Gamma', t \in [0,T] \mbox{ a.e.}\}$, we have
\begin{equation} \nonumber
\begin{split}
J^\alpha[A,R_\alpha] & :=  \mathbb{E}^A\left[\bold{w}_0 + \int_0^T r^\alpha(A)dt + R_\alpha\right]\nonumber\\
& =  \bold{w}_\alpha + \mathbb{E}^A\left[\int_0^T r^\alpha(A)-\hat{r}^\alpha(A)+f(A)Y dt-\int_0^T Y dX_\alpha\right]\nonumber\\
& =  \bold{w}_\alpha + \mathbb{E}^A\left[\int_0^T r^\alpha(A) - \hat{r}^\alpha(A)dt \right]\nonumber\\
& =  \bold{w}_\alpha \geq J^\alpha[A',R_\alpha]\quad \forall A'\in \bar{\mathcal{A}} \cap \{\bold{A} : [0,T] \to \mathbb{R} \: | \: \bold{A}(t) \in \Gamma, t \in [0,T] \mbox{ a.e.}\}.
\end{split}
\end{equation}
Here, the last equality holds due to the fact that $A (t)$ is in the touching set $\Gamma'$ for $t \in [0,T]$ a.e.
Therefore, $A$ is incentive compatible with $R_\alpha$ and the individual rationality condition is satisfied.
\end{proof}
Note that $R_\alpha$ given by \eqref{compensation} is not an admissible compensation scheme because it may not be $\mathcal{F}^{X_\alpha}(T)$-measurable. In fact, it is $\mathcal{F}^{B}(T)$-measurable.
However, we will show in the following subsection that any contract optimal to the principal has $R_\alpha$ such that it is $\mathcal{F}^{X_\alpha}(T)$-measurable and therefore is implementable.

\subsection{Optimality for Principal}

In the previous subsection, we proved that a class of contracts is incentive compatible and individually rational for the agent. In this subsection, we prove that it is sufficient for the principal to consider only contracts of this form.  In other words, the proposed class of contracts is optimal to the principal.

First, we prove two inequalities which relate $r^\alpha$ to its $(f,\Gamma )$-concave envelope $\hat{r}^\alpha=\hat{\phi}\circ f$ and the super-gradient $\partial\hat{\phi}$.

\begin{lemma}\label{lem:env}
Let $\hat{r}^\alpha=\hat{\phi}\circ f$ be the $(f,\Gamma)$-concave envelope of $r^\alpha$ and recall
\[\Gamma' := \left\{a\in\Gamma\mid r^\alpha(a)=\hat{r}^\alpha(a)\right\}.\]
Then, for any $a\in\Gamma$, we have 
\begin{enumerate}
\item If $y \not\in(\partial\hat{\phi})\left(f(a)\right)$, then there exists $a' \in \Gamma$ such that
\[\hat{r}^\alpha(a)+y (f(a' )-f(a)) < r^\alpha(a' ).\]
\item If $y \in(\partial\hat{\phi})\left(f(a)\right)$ and $a\not\in\Gamma'$, then there exists $a'\in\Gamma$ such that:
\[r^\alpha(a )+y (f(a')-f(a )) < r^\alpha(a').\]
\end{enumerate}\end{lemma}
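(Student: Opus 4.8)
The plan is to prove each item by contradiction, exploiting the minimality clause in the definition of the $(f,\Gamma)$-concave envelope, together with the explicit description of $\hat{\phi}$ as a least concave majorant that is produced by the construction in Section 5.

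For item (2) I would suppose the conclusion fails, so that $r^\alpha(a)+y(f(a')-f(a))\geq r^\alpha(a')$ holds for every $a'\in\Gamma$. The right way to read this is to introduce the affine (hence concave) function $M(z):=r^\alpha(a)+y^\top(z-f(a))$ on $\mathbb{R}^n$; the assumed inequality says precisely that $M\circ f$ is an $f$-concave function dominating $r^\alpha$ on all of $\Gamma$. The minimality property of the envelope then forces $M(f(a'))\geq\hat{r}^\alpha(a')$ for every $a'\in\Gamma$. Evaluating at $a'=a$, where the slope term vanishes, gives $r^\alpha(a)=M(f(a))\geq\hat{r}^\alpha(a)$, contradicting $a\notin\Gamma'$ (equivalently $r^\alpha(a)<\hat{r}^\alpha(a)$, using $\hat{r}^\alpha\geq r^\alpha$ on $\Gamma$). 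Notice this argument never uses the hypothesis $y\in(\partial\hat{\phi})(f(a))$; that hypothesis merely records the case in which item (2) gets applied later, and the strict conclusion falls out for free.

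For item (1) I would again argue by contradiction, assuming $\hat{r}^\alpha(a)+y(f(a')-f(a))\geq r^\alpha(a')$ for all $a'\in\Gamma$, and set $L(z):=\hat{r}^\alpha(a)+y^\top(z-f(a))=\hat{\phi}(f(a))+y^\top(z-f(a))$. As before, $L\circ f\geq r^\alpha$ on $\Gamma$, so $L$ is an affine majorant of the data $\{(f(a'),r^\alpha(a')):a'\in\Gamma\}$, and minimality yields $L\geq\hat{\phi}$ on the image $f(\Gamma)$ with equality at $f(a)$. The obstacle is that the conclusion I want, $y\in(\partial\hat{\phi})(f(a))$, is a statement about $\hat{\phi}$ on all of $\mathbb{R}^n$, whereas minimality only controls $\hat{\phi}$ on $f(\Gamma)$. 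This is exactly where I must invoke the concrete construction of $\hat{\phi}$ from Section 5, namely that $\hat{\phi}$ is the least concave majorant of the data, i.e.\ the pointwise infimum of all affine functions dominating $\{(f(a'),r^\alpha(a'))\}$. Since $L$ is one such affine majorant, $\hat{\phi}\leq L$ everywhere on $\mathbb{R}^n$; combined with $L(f(a))=\hat{\phi}(f(a))$, this is precisely the super-gradient inequality $y\in(\partial\hat{\phi})(f(a))$, contradicting the hypothesis of item (1).

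I expect item (1) to be the main obstacle, precisely because of this gap between the ``on-$\Gamma$'' minimality encoded in the envelope definition and the ``on-$\mathbb{R}^n$'' nature of the super-gradient. An alternative to the contradiction argument, which makes the role of the construction even more transparent, is a direct one: given $y\notin(\partial\hat{\phi})(f(a))$ there is a point $z^*$ with $\hat{\phi}(z^*)>L(z^*)$; writing $z^*=\sum_i\lambda_i f(a_i)$ as a (Carath\'eodory-bounded) convex combination realizing the concave hull with $\sum_i\lambda_i r^\alpha(a_i)>L(z^*)$, the affinity of $L$ gives $\sum_i\lambda_i[r^\alpha(a_i)-L(f(a_i))]>0$, so some index $i$ delivers the desired $a'=a_i$. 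I would present the contradiction version as the main proof and keep this representation in mind to confirm that the required least-concave-majorant structure is indeed what the Section 5 construction supplies.
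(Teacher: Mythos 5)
Your proof is correct, and while it rests on the same two pillars as the paper's --- contradiction plus the minimality clause of the $(f,\Gamma)$-concave envelope --- the route through item~(1) is genuinely different and in fact more careful. The paper proves both items by constructing an explicit competitor $\tilde{r}^\alpha=\min\{\hat{r}^\alpha,\,L\circ f\}$ (with $L$ the relevant affine function) and contradicting minimality at a witness point; for item~(1) it first negates the super-gradient condition to produce a point $a'\in\Gamma$ with $\hat{r}^\alpha(a)+y(f(a')-f(a))<\hat{r}^\alpha(a')$, silently assuming that the failure of the super-gradient inequality --- which by definition is quantified over all of $\mathbb{R}^n$ --- can be witnessed at a point of $f(\Gamma)$. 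You identify exactly this gap and close it by invoking the Section~5 construction: since $L\circ f$ majorizes $r^\alpha$ on $\Gamma$, the affine $L$ majorizes the data function $\phi$ (which is $-\infty$ off $f(\Gamma)$), hence $L\geq\hat{\phi}$ on all of $\mathbb{R}^n$ with equality at $f(a)$, which is precisely $y\in(\partial\hat{\phi})(f(a))$ and contradicts the hypothesis directly. Your item~(2) is a streamlined version of the paper's: rather than taking the min with $\hat{r}^\alpha$, you apply minimality directly to the affine majorant $M\circ f$ and evaluate at $a$ itself; you also correctly note, as the paper's proof implicitly confirms, that the hypothesis $y\in(\partial\hat{\phi})(f(a))$ is never used there. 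The only point to flag is a mild technicality shared with the paper: identifying the concave envelope with the pointwise infimum of affine majorants can fail at boundary points (closed versus ordinary concave hull), but the inequality you actually need, namely that the least concave majorant lies below any affine majorant of $\phi$, holds regardless, so your argument stands.
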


\begin{proof}
Both proofs proceed by contradiction by building a better $(f,\Gamma)$-concave envelope than $\hat{r}^\alpha=\hat{\phi}\circ f$.
\begin{enumerate}
\item First, let $y \not \in(\partial\hat{\phi})\left(f(a )\right)$. Then, there exists $a'\in\Gamma$ such that
\[\hat{r}^\alpha(a ) + y (f(a')-f(a)) < \hat{r}^\alpha(a').\]
Furthermore, suppose that in fact for all $a'\in\Gamma$ we have
\[\hat{r}^\alpha(a)+y (f(a')-f(a)) \geq r^\alpha(a').\]
Define a function $\tilde{r}^\alpha:\mathbb{R}^n\to\mathbb{R}$ as
\[\tilde{r}^\alpha(x) := \min\left\{\hat{r}^\alpha(x),\hat{r}^\alpha(a)+y (f(x)-f(a))\right\}.\]
Then, we have
\begin{equation} \nonumber
\tilde{r}^\alpha \geq r^\alpha.
\end{equation}
Therefore, $\tilde{r}^\alpha$ is $f$-concave as the minimum of two $f$-concave functions. However, at $a'$, we have
\[\tilde{r}^\alpha(a') = \hat{r}^\alpha(a)+y (f(a')-f(a)) < \hat{r}^\alpha(a').\]
However, this contradicts the fact that $\hat{r}^\alpha$ is the $(f,\Gamma)$-concave envelope. Therefore, there exists $a'\in\Gamma$ such that
\[\hat{r}^\alpha(a)+y (f(a')-f(a)) < r^\alpha(a').\]
\item Next, let $y \in(\partial\hat{\phi})\left(f(a)\right)$, but $a \not\in\Gamma'$. Then, $r^\alpha(a) < \hat{r}^\alpha(a)$. Furthermore, suppose that in fact for all $a' \in\Gamma$ we have
\[r^\alpha(a)+y (f(a')-f(a))\geq r^\alpha(a').\]
Define a function $\tilde{r}^\alpha:\mathbb{R}^n\to\mathbb{R}$ as
\[\tilde{r}^\alpha(x) := \min\left\{\hat{r}^\alpha(x),r^\alpha(a)+y (f(x)-f(a))\right\}\]
Then, we have
\[
\tilde{r}^\alpha \geq r^\alpha.\]
Therefore, $\tilde{r}^\alpha$ is $f$-concave as the minimum of two $f$-concave functions. However, at $a$, we compute
\[\tilde{r}^\alpha(a) = r^\alpha(a) < \hat{r}^\alpha(a).\]
This is contradictory to the fact that $\hat{r}^\alpha$ is the $(f,\Gamma)$-concave envelope. Therefore, there should exist $a'\in\Gamma$ such that
$r^\alpha(a)+y (f(a')-f(a)) < r^\alpha(a')$.
\end{enumerate}
\end{proof}

Next, we demonstrate that any incentive compatible contract must be in the form of that defined above. This will suggest that it  is in fact optimal for the principal to choose a contract of this form.
\begin{proposition} \label{compensation_form}
Suppose that 
 $A \in  \bar{\mathcal{A}} \cap \{\bold{A}: [0,T] \to \mathbb{R} \: | \: \bold{A}(t) \in \Gamma, t \in [0,T] \mbox{ a.e.}\}$ is incentive compatible with $R_\alpha \in \bar{\mathcal{R}}$.  Then, the following must hold:
\begin{equation} \label{comp_prop2}
R_\alpha = w  + \int_0^T f(A)Y -r^\alpha(A)dt - \int_0^T Y dX_\alpha
\end{equation}
for some constant $w \in \mathbb{R}$,
where $Y \in \bar{\mathcal{Y}}$ such that
$Y(t) \in (\partial\hat{\phi})\left(f(A(t))\right)$ for $ t \in [0,T]$ a.e.
\end{proposition}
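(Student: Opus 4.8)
The plan is to first produce the integral representation of $R_\alpha$ by a martingale-representation argument, and only then to use incentive compatibility together with Lemma \ref{lem:env}(1) to force the representing integrand to be a super-gradient. I would work throughout under the measure $\mathbb{P}^A$ induced by the recommended effort, under which $\sigma\,dB^A = dX_\alpha - f(A)\,dt$ for the $\mathbb{P}^A$-Brownian motion $B^A$. Define the agent's continuation value
\[
W(t) := \mathbb{E}^A\!\left[\int_t^T r^\alpha(A_s)\,ds + R_\alpha \,\Big|\, \mathcal{F}^B(t)\right],
\]
so that $M(t) := \int_0^t r^\alpha(A_s)\,ds + W(t)$ is a $\mathbb{P}^A$-martingale with $M(T) = \int_0^T r^\alpha(A_s)\,ds + R_\alpha$ (using $W(T)=R_\alpha$, which holds since $R_\alpha$ is $\mathcal{F}^B(T)$-measurable) and with $M(0)=W(0)$ constant.

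Since the filtration is Brownian, the martingale representation theorem supplies a progressively measurable $Z$ with $M(t)=M(0)+\int_0^t Z_s\,dB^A_s$. Setting $Y := -Z/\sigma$ and substituting $\sigma\,dB^A = dX_\alpha - f(A)\,dt$ converts the $dB^A$-integral into $-\int_0^T Y\,dX_\alpha + \int_0^T Y f(A)\,dt$; rearranging $R_\alpha = M(T)-\int_0^T r^\alpha(A_s)\,ds$ then yields exactly \eqref{comp_prop2} with the constant $w=W(0)$. This already delivers the claimed form and the $\mathcal{F}^B$-progressive measurability of $Y$, so it remains only to identify $Y(t)$ as a super-gradient.

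For that I would re-evaluate the agent's payoff under an arbitrary admissible deviation $A'$ with $A'(t)\in\Gamma$. Plugging \eqref{comp_prop2} into $J^\alpha[A',R_\alpha]$ and passing to $\mathbb{P}^{A'}$ (under which $dX_\alpha = f(A')\,dt + \sigma\,dB^{A'}$, so the $dX_\alpha$-integral contributes only its drift and the $dB^{A'}$-term has zero mean) collapses all path-dependent terms and gives
\[
J^\alpha[A',R_\alpha] - J^\alpha[A,R_\alpha] = \mathbb{E}^{A'}\!\left[\int_0^T r^\alpha(A'_t)-r^\alpha(A_t)+Y_t\big(f(A_t)-f(A'_t)\big)\,dt\right].
\]
Incentive compatibility forces this to be $\le 0$ for every such $A'$. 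Suppose, for contradiction, that $Y_t\notin(\partial\hat\phi)(f(A_t))$ on a set of positive $dt\otimes d\mathbb{P}^A$-measure (equivalently positive $\mathbb{P}^{A'}$-measure, the two measures being equivalent by Girsanov). On that set Lemma \ref{lem:env}(1), applied pointwise with $a=A_t$ and $y=Y_t$, produces some $a'\in\Gamma$ with $\hat r^\alpha(A_t)+Y_t(f(a')-f(A_t))<r^\alpha(a')$; since $A_t\in\Gamma$ and $\hat r^\alpha\ge r^\alpha$ on $\Gamma$, the integrand $r^\alpha(a')-r^\alpha(A_t)+Y_t(f(A_t)-f(a'))$ is then strictly positive there. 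Choosing such an $a'$ and setting $A'=a'$ on the bad set and $A'=A$ elsewhere makes the integrand strictly positive on a positive-measure set and zero off it, so the deviation gain is strictly positive, contradicting incentive compatibility. Hence $Y_t\in(\partial\hat\phi)(f(A_t))$ for a.e.\ $t$.

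The main obstacle is the rigor of this last step rather than its idea. One must (i) guarantee that the correspondence $(t,\omega)\mapsto\{a'\in\Gamma:\ \text{the strict inequality holds}\}$ admits a progressively measurable selection yielding an admissible $A'\in\bar{\mathcal{A}}$, and (ii) control integrability so that the $B^{A'}$-integral is a genuine (zero-mean) martingale and the deviation gain is a well-defined strictly positive number, e.g.\ by partitioning the bad set into pieces on which the integrand exceeds a fixed $\varepsilon>0$ and $a'$ is drawn from a countable dense family. The representation step likewise tacitly needs enough integrability on $R_\alpha$ (square-integrability under $\mathbb{P}^A$) for $M$ to be a true $L^2$-martingale, which I would either impose as a standing admissibility hypothesis or obtain by localization.
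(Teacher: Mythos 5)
Your proposal is correct and follows essentially the same route as the paper: define the continuation value, apply the martingale representation theorem to obtain the integral form \eqref{comp_prop2} with $w=W(0)$, and then derive the super-gradient condition by contradiction using Lemma \ref{lem:env}(1) to build a strictly profitable deviation. The measurability-of-selection and integrability caveats you flag at the end are real technical points that the paper's proof also passes over silently, so they do not distinguish the two arguments.
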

\begin{proof}
Let $A \in \bar{\mathcal{A}} \cap \{\bold{A}: [0,T] \to \mathbb{R} \: | \: \bold{A}(t) \in \Gamma, t \in [0,T] \mbox{ a.e.}\}$ and $R_\alpha \in \bar{\mathcal{R}}$ satisfy the incentive compatibility constraint. Define
\[V(t):=\mathbb{E}^A\left[\int_t^T r^\alpha(A)ds + R_\alpha\mid\mathcal{F}^{B}(t)\right].\]
By the Martingale representation theorem (e.g., \cite{Karatzas1991, Oksendal2003}), there exists a unique (up to a set of zero measure) process $Y \in \bar{\mathcal{Y}}$ such that
\begin{equation}\nonumber
\begin{split}
V(t) + \int_0^t r^\alpha(A)ds &= w - \int_0^t \sigma_\alpha Y dB\\
& = w + \int_0^t f(A)Yds - \int_0^t Y dX_\alpha,
\end{split}
\end{equation}
where $w = V(0)$.
Then, we have
\begin{equation}\nonumber
\begin{split}
R_\alpha &= V(T)\\
& = w + \int_0^T f(A)Y-r^\alpha(A)dt - \int_0^T Y dX_\alpha.
\end{split}
\end{equation}

Next, we show that $Y(t) \in(\partial\hat{\phi})\left(f(A(t))\right)$ for $t \in[0,T]$ a.e. Suppose that $Y(t)\not\in(\partial\hat{\phi})\left(f(A(t))\right)$ for $t \in \mathcal{T}_1$, where $\mathcal{T}_1$ is a set of a strictly positive $\mathbb{P}\times\lambda$-measure. By the first claim in Lemma \ref{lem:env}, we can find $A' \in \bar{\mathcal{A}} \cap \{\bold{A}: [0,T] \to \mathbb{R} \: | \: \bold{A}(t) \in \Gamma, t \in [0,T] \mbox{ a.e.}\}$ such that
\[\hat{r}^\alpha(A(t)) + Y(t)\left[f(A'(t))-f(A(t))\right] < r^\alpha(A'(t)) \quad \forall t \in \mathcal{T}_1.\]
Then, the agent would prefer to follow $A'$ over $A$ because
\begin{equation}
\begin{split}
J^\alpha[A',R_\alpha] & =  \mathbb{E}^{A'}\left[w_0 + \int_0^T r^\alpha(A')dt + R_\alpha\right]\nonumber\\
& =  \bold{w}_0 + w + \mathbb{E}^{A'}\left[\int_0^T r^\alpha(A')-r^\alpha(A)+ Y\left[f(A)-f(A')\right]dt\right]\nonumber\\
& \geq  \bold{w}_0 + w + \mathbb{E}^{A'}\left[\int_0^T r^\alpha(A')-\hat{r}^\alpha(A)+ Y\left[f(A)-f(A')\right]dt\right]\nonumber\\
& >  \bold{w}_0 + w + \mathbb{E}^{A'}\left[\int_0^T r^\alpha(A')-r^\alpha(A')dt\right]\nonumber\\
& =  \bold{w}_0 + w\nonumber\\
& =  J^\alpha[A,R_\alpha].\nonumber
\end{split}
\end{equation}
But this contradicts the incentive compatibility of the contract, so $Y(t)\in(\partial\hat{\phi})\left(f(A(t))\right)$ for $t \in [0,T]$ a.e.
\end{proof}

The compensation given solely by \eqref{comp_prop2} is $\mathcal{F}^{B}(T)$-measurable and may not be in $\bar{\mathcal{R}}$.
In Theorem \ref{thm:gen}, we will design an optimal contract in a relaxed space such that \eqref{comp_prop2} holds, and show that at an optimum the compensation is in $\bar{\mathcal{R}}$ and therefore is implementable.

Next, we show that the principal should only consider contracts with the suggested effort such that $A(t) \in\Gamma'$ for $t \in [0,T]$ a.e. Any other suggested effort would not be incentive compatible.

\begin{proposition}\label{prop:opt}
For any incentive compatible contract, we have
\[
 A \in \mathcal{A} \cap \{ \bold{A}: [0,T] \to \mathbb{R} \: | \: \bold{A}(t) \in \Gamma', t \in [0,T] \mbox{ a.e.} \}, \] where
\[
\Gamma' := \left\{\left(a_1,\cdots,a_n\right)\in\Gamma\mid\hat{r}^\alpha(a_1,\cdots,a_n)=r^\alpha(a_1,\cdots,a_n)\right\}.
\]
\end{proposition}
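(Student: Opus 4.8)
The plan is to argue by contradiction, in direct parallel to the second half of the proof of Proposition \ref{compensation_form}, but invoking the \emph{second} claim of Lemma \ref{lem:env} in place of the first. The role of Claim~1 there was to force $Y(t) \in (\partial\hat{\phi})(f(A(t)))$ a.e.; here Claim~2 will force $A(t) \in \Gamma'$ once we already know $Y(t) \in (\partial\hat{\phi})(f(A(t)))$ a.e.

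First I would invoke Proposition \ref{compensation_form}: since $A \in \bar{\mathcal{A}} \cap \{\bold{A}(t) \in \Gamma\}$ is incentive compatible with $R_\alpha$, there is a constant $w$ and a process $Y \in \bar{\mathcal{Y}}$ with $Y(t) \in (\partial\hat{\phi})(f(A(t)))$ a.e.\ such that $R_\alpha$ takes the form \eqref{comp_prop2}. A short computation under $\mathbb{P}^A$ --- using $dX_\alpha = f(A)dt + \sigma dB^A$ so that the stochastic integral has zero mean --- then gives $J^\alpha[A, R_\alpha] = \bold{w}_0 + w$, exactly as before.

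Now suppose, for contradiction, that $A(t) \not\in \Gamma'$ on a set $\mathcal{T}_2$ of strictly positive $\mathbb{P}\times\lambda$-measure. On $\mathcal{T}_2$ both hypotheses of the second claim of Lemma \ref{lem:env} hold, namely $Y(t) \in (\partial\hat{\phi})(f(A(t)))$ and $A(t) \not\in \Gamma'$, so the lemma supplies, for each such $t$, a point $A'(t) \in \Gamma$ with $r^\alpha(A(t)) + Y(t)[f(A'(t)) - f(A(t))] < r^\alpha(A'(t))$. I would then define the deviation $A' \in \bar{\mathcal{A}} \cap \{\bold{A}(t) \in \Gamma\}$ to equal this improving selection on $\mathcal{T}_2$ and to equal $A$ off $\mathcal{T}_2$. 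Repeating the Girsanov computation under $\mathbb{P}^{A'}$ yields
\[
J^\alpha[A', R_\alpha] = \bold{w}_0 + w + \mathbb{E}^{A'}\!\left[\int_0^T r^\alpha(A') - r^\alpha(A) + Y\bigl(f(A) - f(A')\bigr)\, dt\right].
\]
Off $\mathcal{T}_2$ the integrand vanishes since $A' = A$, while on $\mathcal{T}_2$ it is strictly positive by the rearranged lemma inequality; hence the integral is strictly positive and $J^\alpha[A', R_\alpha] > \bold{w}_0 + w = J^\alpha[A, R_\alpha]$, contradicting incentive compatibility. Therefore $A(t) \in \Gamma'$ for $t \in [0,T]$ a.e.

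The main obstacle I anticipate is the measurable-selection step: Lemma \ref{lem:env} only produces $A'(t)$ pointwise, and one must verify that these choices can be assembled into a genuinely $\mathcal{F}^B$-progressively measurable process taking values in $\Gamma$, so that $A' \in \bar{\mathcal{A}}$ and the expectations above are well defined. This is the same gap left implicit in the proof of Proposition \ref{compensation_form}, and I would close it with a standard measurable-selection theorem, choosing $A'(t)$ measurably in $(\omega,t)$ from the nonempty set of improving points and splicing with $A$ via the indicator of the measurable set $\mathcal{T}_2$.
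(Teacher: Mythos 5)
Your proof is correct and follows essentially the same route as the paper's: invoke Proposition \ref{compensation_form} to pin down the form of $R_\alpha$ and the super-gradient condition on $Y$, then apply the second claim of Lemma \ref{lem:env} on the positive-measure set where $A(t)\not\in\Gamma'$ to build a strictly improving deviation, contradicting incentive compatibility. Your explicit handling of the splicing off $\mathcal{T}_2$ and the measurable-selection caveat is slightly more careful than the paper, which leaves both implicit, but the argument is the same.
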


\begin{proof}
By Proposition \ref{compensation_form}, the compensation of any incentive compatible contract must be of the form
\[R_\alpha = w + \int_0^T f(A)Y-r^\alpha(A)dt - \int_0^T Y dX_\alpha\]
with $Y \in  \bar{\mathcal{Y}}$ such that $Y(t)\in(\partial\hat{\phi})\left(f(A(t))\right)$ for $t \in [0,T]$ a.e. given $A \in \bar{\mathcal{A}} \cap \{\bold{A}: [0,T] \to \mathbb{R} \: | \: \bold{A}(t) \in \Gamma, t \in [0,T] \mbox{ a.e.}\}$.
Suppose that $A (t)  \not\in \Gamma'$ for $t \in \mathcal{T}_2$, where 
$\mathcal{T}_2$ is
 a set of a strictly positive $\mathbb{P}\times\lambda$-measure. By Lemma \ref{lem:env}, we can find $A''\in \bar{\mathcal{A}} \cap \{\bold{A}: [0,T] \to \mathbb{R} \: | \: \bold{A}(t) \in \Gamma, t \in [0,T] \mbox{ a.e.}\}$ such that
\[r^\alpha(A(t)) + Y(t)\left[f(A''(t))-f(A(t))\right] < r^\alpha(A''(t)) \quad \forall t \in \mathcal{T}_2.\]
But we can check, the agent would prefer to follow $A''$ over $A$ since
\begin{eqnarray} 
J^\alpha[A'',R_\alpha] & = & \mathbb{E}^{A''}\left[\bold{w}_0 + \int_0^T r^\alpha(A'')dt + R_\alpha\right]\nonumber\\
& = & \bold{w}_0 + w + \mathbb{E}^{A''}\left[\int_0^T r^\alpha(A'')-r^\alpha(A)+ Y\left[f(A)-f(A'')\right]dt\right]\nonumber\\
& > & \bold{w}_0 + w  \nonumber \\
& = & J^\alpha[A,R_\alpha].\nonumber
\end{eqnarray}
But this contradicts the incentive compatibility of the contract, so $(A_1(t),\cdots ,A_n(t))\in\Gamma'$ for $t \in [0,T]$ a.e.
\end{proof}

Finally, because $q^\pi$ is strictly decreasing, we can argue that it suffices to consider contracts with $w=w_\alpha-w_0$. This choice makes the individual rationality constraint binding. Any larger choice of $w$ would be sub-optimal for the principal.

Up to this point, we have made no reference to the payment $R_\pi$ that the principal receives at the terminal time, which can be quite general. 
We have restricted the class of admissible compensation schemes for the agent, but which of the remaining the principal chooses will depend upon $R_\pi$. 
In particular, the principal will solve an optimization problem summarized in the following theorem:
\begin{theorem} \label{thm:gen}
Consider the following optimization problem:
\begin{equation}\label{reform_gen}
\begin{split}
\sup\limits_{P \in \bar{\mathcal{P}}, A \in \bar{\mathcal{A}}, Y \in \bar{\mathcal{Y}}} \quad & J^\pi[P,A,R_\alpha,R_\pi] \\
\mbox{subject to} \quad & A(t) \in\Gamma'\subseteq\mathbb{R}^n, \quad t \in [0,T] \mbox{ a.e.}\\
\quad  & Y(t) \in(\partial\hat{\phi})\left(f(A(t))\right), \quad  t \in [0,T] \mbox{ a.e.},
\end{split}
\end{equation}
where $R_\alpha$ is given by
\begin{equation} \label{comp_feas}
R_\alpha = \bold{w}_\alpha- \bold{w}_0 + \int_0^T f\left(A\right)Y-r^\alpha(A)dt - \int_0^T Y dX_\alpha.
\end{equation}
Then, the optimization problem \eqref{reform_gen} is equivalent to the original contract design problem \eqref{opt_gen}.
\end{theorem}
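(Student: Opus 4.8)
The plan is to prove equivalence by showing that the two problems attain the same optimal value and that their maximizers correspond, viewing \eqref{reform_gen} as a reformulation in which the incentive-compatibility constraint of \eqref{opt_gen} has been traded for the explicit restrictions $A(t)\in\Gamma'$ and $Y(t)\in(\partial\hat{\phi})(f(A(t)))$ together with the prescribed form \eqref{comp_feas} for $R_\alpha$. Essentially all of the work has already been done by the incentive-compatibility proposition above and by Propositions \ref{compensation_form} and \ref{prop:opt}; what remains is to assemble them into the two inequalities between the optimal values and to settle the measurability of $R_\alpha$.

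First I would bound the value of \eqref{opt_gen} above by that of \eqref{reform_gen}. Let $(P,A,R_\alpha)$ be any feasible contract for \eqref{opt_gen}, so $A(t)\in\Gamma$ a.e. and $A$ is incentive compatible and individually rational under $R_\alpha$. Proposition \ref{compensation_form} forces $R_\alpha$ into the form \eqref{comp_prop2} for some $Y\in\bar{\mathcal{Y}}$ with $Y(t)\in(\partial\hat{\phi})(f(A(t)))$ a.e., and Proposition \ref{prop:opt} forces $A(t)\in\Gamma'$ a.e. Because $q^\pi$ is strictly decreasing, as in the binding argument preceding the theorem, replacing the constant $w$ by $\bold{w}_\alpha-\bold{w}_0$ keeps individual rationality satisfied (now with equality) while weakly raising the principal's objective, and turns \eqref{comp_prop2} into exactly \eqref{comp_feas}. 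The resulting triple $(P,A,Y)$ is feasible for \eqref{reform_gen} with objective at least that of the original contract; taking the supremum shows the value of \eqref{opt_gen} is at most that of \eqref{reform_gen}.

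For the reverse inequality I would start from an optimal $(P,A,Y)$ for \eqref{reform_gen} and define $R_\alpha$ by \eqref{comp_feas}. The incentive-compatibility proposition above then guarantees that $A$ is incentive compatible with this $R_\alpha$ and that $J^\alpha[A,R_\alpha]\geq\bold{w}_\alpha$, so $(P,A,R_\alpha)$ meets every constraint of \eqref{opt_gen} and attains the same objective, \emph{provided} $R_\alpha$ is admissible, that is, $\mathcal{F}^{X_\alpha}(T)$-measurable. Granting admissibility, the value of \eqref{reform_gen} is at most that of \eqref{opt_gen}, and combined with the first inequality this yields equality of the two values together with a correspondence of optimizers.

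The main obstacle is exactly this admissibility, since $A$ and $Y$ are only $\mathcal{F}^B$-progressively measurable and so \eqref{comp_feas} is a priori merely $\mathcal{F}^B(T)$-measurable, whereas \eqref{opt_gen} demands $R_\alpha\in\bar{\mathcal{R}}$; thus \eqref{reform_gen} is genuinely a relaxation whose tightness must be established at the optimum. I would resolve this using the dynamic programming structure: \eqref{reform_gen} reduces to a one-dimensional stochastic control problem whose state is the agent's continuation value $V(t)=\mathbb{E}^A[\int_t^T r^\alpha(A)\,ds+R_\alpha\mid\mathcal{F}^B(t)]$, and whose optimal controls $A^*,Y^*$ may be taken in feedback form as functions of $V(t)$. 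Substituting $\sigma\,dB=dX_\alpha-f(A)\,dt$ shows that $V$ then solves an equation driven by $X_\alpha$ with these feedback coefficients, so that $A^*$ and $Y^*$ become $\mathcal{F}^{X_\alpha}$-adapted; consequently every term of \eqref{comp_feas}, including the stochastic integral $\int_0^T Y\,dX_\alpha$, is $\mathcal{F}^{X_\alpha}(T)$-measurable. (Linearity of $q^\pi$ helps here, since $\mathbb{E}^A[q^\pi(R_\alpha)]$ then depends only on $A$ and not on the choice of $Y$, so passing to an $\mathcal{F}^{X_\alpha}$-adapted representative leaves the principal's objective unchanged.) Hence the optimal compensation lies in $\bar{\mathcal{R}}$ and is implementable, which closes the second inequality and establishes the claimed equivalence.
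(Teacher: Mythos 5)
Your proposal is correct and follows essentially the same route as the paper: it invokes Propositions \ref{compensation_form} and \ref{prop:opt} to force the form of any incentive compatible contract, uses the strict monotonicity of $q^\pi$ to make the individual rationality constraint binding at $w=\bold{w}_\alpha-\bold{w}_0$, and resolves the $\mathcal{F}^{X_\alpha}(T)$-measurability of $R_\alpha$ at the optimum by passing to feedback controls of the continuation value in the equivalent one-dimensional stochastic control problem. Your organization as two inequalities between optimal values, with the sufficiency direction explicitly justified by the incentive-compatibility proposition, is a slightly more explicit packaging of the same argument.
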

\begin{proof}
By Propositions \ref{compensation_form} and \ref{prop:opt}, incentive compatible contracts $R_\alpha$ are characterized by
\[R_\alpha = w + \int_0^T f(A)Y-r^\alpha(A)dt-\int_0^T Y dX_\alpha\]
for $w\in\mathbb{R}$, $A \in \bar{\mathcal{A}}$ such that
$A(t)\in\Gamma'\subseteq\mathbb{R}^n$ and 
$Y \in \bar{\mathcal{Y}}$ such that
$Y(t)\in(\partial\hat{\phi})\left(f(A(t))\right)$ for $t \in [0,T]$ a.e. By the same computation as in Proposition \ref{prop:opt}, we observe that $R_\alpha$ is individually rational for the agent if and only if
\[J^\alpha\left[A,R_\alpha\right] = \bold{w}_0 + w \geq \bold{w}_\alpha.\]
Now suppose $R_\alpha$ has $w> \bold{w}_\alpha- \bold{w}_0$. Then, we define a new utility,
\[R_\alpha' = \bold{w}_\alpha- \bold{w}_0 + \int_0^T f(A)Y-r^\alpha(A)dt-\int_0^T Y dX_\alpha.\]
Note that both $(A, R_\alpha)$ and $(A, R_\alpha')$ satisfy 
the incentive compatibility  and individual rationality conditions. However, we have
\begin{equation} \nonumber
\begin{split}
J^\pi\left[P,A,R_\alpha',R_\pi\right] &= J^\pi\left[P,A,R_\alpha,R_\pi\right]+\mathbb{E}^A\left[q^\pi(R_\alpha')-q^\pi(R_\alpha)\right] \\
&> J^\pi\left[P,A,R_\alpha,R_\pi\right],
\end{split}
\end{equation}
because $q^\pi$ is strictly decreasing and $R_\alpha'-R_\alpha = \bold{w}_\alpha - \bold{w}_0-w< 0$ almost surely. Therefore, we conclude it is optimal for the principal to choose $w= \bold{w}_\alpha- \bold{w}_0$.

Let $A^*$ and $Y^*$ solve the optimization problem \eqref{reform_gen}.
We now show that $R_\alpha$ given by \eqref{comp_feas} with $A^*$ and $Y^*$ is in $\bar{\mathcal{R}}$ and therefore is implementable.
The problem \eqref{reform_gen} can be rewritten as 
the following stochastic optimal control problem:
\begin{equation} \nonumber
\begin{split}
\sup_{P \in \bar{\mathcal{P}}, A \in \bar{\mathcal{A}}, Y \in \bar{\mathcal{Y}}} \quad &J^\pi [P, A, R_\alpha, R_\pi] \\
\mbox{subject to} \quad & dW = - r^\alpha (A) dt - Y dB\\
& \hspace{0.22in} = (f(A) Y - r^\alpha (A)) dt - Y dX_\alpha\\
&W(0) = \bold{w}_\alpha - \bold{w}_0\\
& R_\alpha = W(T)\\
&A(t) \in\Gamma'\subseteq\mathbb{R}^n, \quad t \in [0,T] \mbox{ a.e.}\\
\quad  & Y(t) \in(\partial\hat{\phi})\left(f(A(t))\right), \quad  t \in [0,T] \mbox{ a.e.},
\end{split}
\end{equation}
which can be solved by dynamic programming.
Its solution is given by a feedback control, $(A^*, Y^*) = (A^*(W(t)), Y^*(W(t)))$. 
Since $W$ is adapted to $\mathcal{F}^{X_\alpha}$, so are $A^*$ and $Y^*$. 
Therefore, $R_\alpha$ given by \eqref{comp_feas} with $A^*$ and $Y^*$ is $\mathcal{F}^{X_\alpha} (T)$-measurable and therefore is in $\bar{\mathcal{R}}$.
\end{proof}

In summary, we have converted a quite general two-player principal-agent with individual rationality and incentive compatibility constraints into an equivalent stochastic optimal problem with only point-wise restrictions on the values of the effort process. The idea in the remainder of the paper will be to iterate this choice with explicit choices of $R_\pi$ to solve Player 0's problem in the principal-agent hierarchy.

\section{Contract for Principal-Agent Hierarchy} \label{sec:main}

Our goal here is to inductively apply the result from above. The hope is that Player 0 can design compensation $R_1,\cdots ,R_{N}$ in such a way that Players 1 through Player $N$ will find it optimal to follow the desired efforts.
\[\begin{array}{ccc}
\text{Principal} & \rightarrow & \text{Aggregate Agent} \\
P_0 & \rightarrow & \framebox[6.5cm]{$P_1\rightarrow\cdots\rightarrow P_k\rightarrow P_{k+1}\rightarrow\cdots\rightarrow P_{N}$}
\end{array}\]
\begin{theorem}\label{thm:Contract}Suppose that $q^k(r) = -\beta_k r$ for  $k = 1, \cdots, N-1$. 
Let
\begin{equation} \label{mu1}
\begin{split}
\mu_k (A_{k:N}) :=\sum_{j=k}^N\gamma_{k,j}r^j(A_j), \quad
\mathbf{w}_k^*:=\sum_{j=k}^N\gamma_{k,j}\mathbf{w}_j
\end{split}
\end{equation}
with
$\gamma_{k,k}:=1$, $\gamma_{k,j} := \prod_{\ell=k}^{j-1}\beta_\ell$, $j =k+1, \cdots, N$, for $k=1, \cdots, N-1$.
Let 
$\hat{\phi}_N$ be the concave envelope of $r^N = \mu^N$ 
and 
$\Gamma_N := \{a_N \in \mathbb{R} \: | \: r^N (a_N) = \hat{\phi}_N (a_N)\}$.
Set
$\hat{\phi}_k$ as $(A_k + \cdots + A_N, \mathbb{R} \times \Gamma_{k+1})$-concave envelope of $\mu_k$ and $\Gamma_k \subseteq\mathbb{R}^{N-k+1}$ as the corresponding touching set for $k=N-1, \cdots, 1$.

Consider the following optimization problem:
\begin{equation} \label{reform}
\begin{array}{rl}
\sup\limits_{A \in \mathcal{A}, Y_1 \in \mathcal{Y}} &J^0[A,R_1] \\
\mbox{subject to} & A(t) \in\Gamma_1\subseteq\mathbb{R}^{N}, \quad t\in[0,T] \mbox{ a.e.}\\
& Y_1(t) \in(\partial\hat{\phi}_1)\left(A_1(t) +\cdots +A_{N}(t)\right), \quad t\in[0,T] \mbox{ a.e.},
\end{array}
\end{equation}
where
\[R_1:=\mathbf{w}_1^*+\int_0^T\left(A_1+\cdots +A_N\right)Y_1-\mu_1\left(A\right)dt - \int_0^T Y_1 dX_1.\]
%
Then, the optimization problem \eqref{reform} is equivalent to the original contract design problem \eqref{opt}.
\end{theorem}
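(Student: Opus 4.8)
The plan is to prove the equivalence by induction on the hierarchy, peeling players off from the bottom and repeatedly invoking the two-player reduction of Theorem~\ref{thm:gen}. For $k=N,N-1,\ldots,1$ I would carry the induction hypothesis $H(k)$: the incentive-compatibility and individual-rationality constraints of Players $k,k+1,\ldots,N$ in \eqref{opt}, taken together, are equivalent to the existence of a process $Y_k$ with $Y_k(t)\in(\partial\hat{\phi}_k)(A_k(t)+\cdots+A_N(t))$ a.e.\ such that $A_{k:N}(t)\in\Gamma_k$ a.e., the level-$k$ individual-rationality constraint binds, and the compensation $R_k$ received by Player $k$ has the form
\[
R_k=\mathbf{w}_k^*+\int_0^T (A_k+\cdots+A_N)Y_k-\mu_k(A_{k:N})\,dt-\int_0^T Y_k\,dX_k.
\]
The base case $H(N)$ is immediate from Section~\ref{sec:gen}: for the single agent Player $N$ the output map is the identity, so the $(f,\mathbb{R})$-concave envelope of $r^N=\mu_N$ is its ordinary concave envelope $\hat{\phi}_N$, and Propositions~\ref{compensation_form}, \ref{prop:opt} together with the reservation-binding argument give exactly $H(N)$.

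For the inductive step I would assume $H(k+1)$ and view Player $k$ as the principal facing the aggregate of Players $k+1,\ldots,N$, whose observable output obeys $dX_{k+1}=(A_{k+1}+\cdots+A_N)\,dt+\sigma\,dB$, i.e.\ $f(A_{k+1:N})=A_{k+1}+\cdots+A_N$. By $H(k+1)$ the implementable pairs $(A_{k+1:N},R_{k+1})$ are precisely those with $A_{k+1:N}\in\Gamma_{k+1}$ and $R_{k+1}$ of the stated form, with the sub-hierarchy's reservation binding. The crucial computation is to substitute this into Player $k$'s utility. Since $q^k(r)=-\beta_k r$ is linear, only $\mathbb{E}^A[R_{k+1}]$ enters, and binding reservation pins it down: using that the stochastic integral $\int_0^T Y_{k+1}\,dX_{k+1}$ has $\mathbb{P}^A$-mean $\mathbb{E}^A[\int_0^T Y_{k+1}(A_{k+1}+\cdots+A_N)\,dt]$, one gets $\mathbb{E}^A[R_{k+1}]=\mathbf{w}_{k+1}^*-\mathbb{E}^A[\int_0^T\mu_{k+1}(A_{k+1:N})\,dt]$, and the auxiliary process $Y_{k+1}$ drops out entirely. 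Invoking $\mu_k=r^k+\beta_k\mu_{k+1}$ and $\mathbf{w}_k^*=\mathbf{w}_k+\beta_k\mathbf{w}_{k+1}^*$ (both immediate from the definition of $\gamma_{k,j}$), Player $k$'s objective collapses to $\mathbb{E}^A[\int_0^T\mu_k(A_{k:N})\,dt+R_k]-\beta_k\mathbf{w}_{k+1}^*$. Because Player $k$ chooses $A_k$ freely while $A_{k+1:N}$ ranges over $\Gamma_{k+1}$, the aggregate of Players $k,\ldots,N$ is now exactly a generalized agent in the sense of Section~\ref{sec:gen}: effort $A_{k:N}$ constrained to $\mathbb{R}\times\Gamma_{k+1}$, output map $f=1^\top$, running utility $\mu_k$, reservation $\mathbf{w}_k^*$, and external terminal payment $R_k$.

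I would then apply Theorem~\ref{thm:gen} once more, at the Player-$(k-1)$/level-$k$ interface. By construction $\hat{\phi}_k$ is the $(1^\top,\mathbb{R}\times\Gamma_{k+1})$-concave envelope of $\mu_k$ and $\Gamma_k$ its touching set, so the equivalence of \eqref{opt_gen} and \eqref{reform_gen} forces any incentive-compatible contract for this aggregate agent to have $A_{k:N}\in\Gamma_k$ and $R_k$ of the displayed form with $Y_k\in(\partial\hat{\phi}_k)(1^\top A_{k:N})$, while strict monotonicity of the principal's payment makes the reservation bind at $\mathbf{w}_k^*$; this is precisely $H(k)$. Iterating down to $H(1)$ characterizes the full constraint set of \eqref{opt}, and substituting the resulting $R_1$ into $J^0[A,R_1]$ yields \eqref{reform}. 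The final step is to note, exactly as in Theorem~\ref{thm:gen}, that $q^0$ strictly decreasing forces Player~0 to take the binding constant $\mathbf{w}_1^*$, so restricting to the displayed $R_1$ loses nothing, and implementability ($R_1\in\mathcal{R}_1$) follows from the feedback-control argument of Theorem~\ref{thm:gen}.

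The main obstacle I anticipate is justifying the inductive reduction rigorously: Player $k$ does not directly control $A_{k+1:N}$ but only the transfer $R_{k+1}$, so I must argue from $H(k+1)$ that the set of lower-level efforts he can actually implement is \emph{exactly} $\Gamma_{k+1}$, and that no deviation in $(A_k,R_{k+1})$ can improve upon the recommended effort once the sub-hierarchy's reservation binds. This is where linearity of $q^k$ is essential: it makes the distribution of $R_{k+1}$ irrelevant beyond its mean, lets the entire sub-hierarchy be summarized by the single scalar $\mathbf{w}_{k+1}^*$, and produces the clean recursion $\mu_k=r^k+\beta_k\mu_{k+1}$ that keeps the aggregate utility inside the class handled by Theorem~\ref{thm:gen}. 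A secondary point, routine given Theorem~\ref{thm:gen}, is verifying that the adaptedness of each $R_k$ and $Y_k$ propagates up the hierarchy so that the contract remains implementable.
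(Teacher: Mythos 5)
Your proposal is correct and follows essentially the same route as the paper: a bottom-up induction that repeatedly invokes Theorem~\ref{thm:gen}, uses linearity of $q^k$ so that $Y_{k+1}$ drops out of $\mathbb{E}^A[R_{k+1}]$ and the recursion $\mu_k=r^k+\beta_k\mu_{k+1}$, $\mathbf{w}_k^*=\mathbf{w}_k+\beta_k\mathbf{w}_{k+1}^*$ emerges, and then treats Players $k,\ldots,N$ as a generalized aggregate agent with effort constrained to $\mathbb{R}\times\Gamma_{k+1}$. The subtlety you flag at the end is exactly the point the paper's inductive hypothesis is designed to handle, so no gap remains.
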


The intuition behind this result is that by choosing a compensation scheme of the form $R_1$, Player 0 can incentivize every player below him in the hierarchy to follow a `suggested effort' process $A=(A_1,\cdots ,A_{N})$. However, it is only possible for Player 0 to incentivize the hierarchy to follow efforts $A(t) \in\Gamma_1$.

The function $\mu_1:\mathbb{R}^{N}\to\mathbb{R}$ represents the cost to Player 0 for incentivizing the hierarchy to perform $A$. The constant $\bold{w}_1^*$ represents a minimum expected utility Player 0 must provide to satisfy the entire hierarchy's individual rationality contraints. Intuitively, the function $\hat{\phi}_1$ encodes information about incentive compatibility conditions.

\begin{proof}
We proceed by induction.
In the base case, we fix a choice of $R_{N-1}$. Player $N-1$ will then solve the following optimization problem:
\[\begin{array}{rl}
\sup\limits_{\substack{A_{N-1:N} \in \mathcal{A}_{N-1:N} \\R_{N}\in \mathcal{R}_{N}}} & J^{N-1}[A_{N-1},A_{N},R_{N-1},R_{N}] \\
\mbox{subject to} & J^{N}[A_{N},R_{N}] \geq  J^N[A_{N}',R_{N}]\quad  \forall A_{N}' \in \mathcal{A}_{N}\\
& J^{N}[A_{N},R_{N}] \geq \bold{w}_{N}.
\end{array}\]
This falls in the form of the generalized two player principal-agent problem. Therefore, we can define the compensation $R_N$ as
\[R_{N} := \bold{w}_{N} + \int_0^T A_{N}Y_N -r^{N}(A_{N}) dt - \int_0^T Y_N dX_{N},\]
where $Y_N \in \mathcal{Y}$ such that $Y_N(t) \in \partial \hat{\phi}_N (A_N(t))$ for $t \in [0,T]$ a.e.,
and
$\hat{\phi}_{N}$ is the standard concave-envelope of $r^{N}$. Define the set
\[\Gamma_{N} := \{a_{N} \in \mathbb{R} \mid r^{N}(a_{N}) = \hat{\phi}_{N}(a_{N})\}.\]
Due to Theorem \ref{thm:gen}, it is equivalent for Player $N-1$ to solve
\[\begin{array}{rl}
\sup\limits_{\substack{A_{N-1:N} \in \mathcal{A}_{N-1:N} \\ Y_N\in \mathcal{Y}}}&J^{N-1}[A_{N-1},A_{N},R_{N-1},R_{N}] \\
\mbox{subject to} & A_{N}(t) \in\Gamma_{N}, \quad t\in[0,T] \mbox{ a.e.}\\
& Y_N(t) \in\partial\hat{\phi}_{N}(A_{N}(t)), \quad t\in[0,T] \mbox{ a.e.}
\end{array}\]

Next, we consider the inductive step. For fixed $R_k$, Player $k$ solves the following optimization problem:
\[\begin{array}{rl}
\sup\limits_{\substack{A_{k:N} \in \mathcal{A}_{k:N} \\
R_{k+1:N} \in \mathcal{R}_{k+1:N}}} & J^k[A_{k:N},R_k,R_{k+1}] \\
\mbox{subject to} & J^{k+1}[A_{k+1:N},R_{k+1},R_{k+2}] \geq J^{k+1}[A_{k+1:N}',R_{k+1},R_{k+2}']\\
&\hspace{1.35in} \forall A_{k+1:N}' \in \mathcal{A}_{k+1:N}, R_{k+2}' \in \mathcal{R}_{k+2} \\
& J^{k+1}[A_{k+1:N},R_{k+1},R_{k+1}] \geq \bold{w}_{k+1}\\
& \hspace{0.5in}\vdots \\
& J^{N}[A_{N},R_{N}] \geq  J^N[A_{N}',R_{N}]\quad\forall A_{N}' \in \mathcal{A}_{N}\\
& J^{N}[A_{N},R_{N}] \geq \bold{w}_{N}.\\
\end{array}\]
The inductive assumption is that it is equivalent for Player $k$ to solve
\[\begin{array}{rl}
\sup\limits_{\substack{A_{k:N} \in \mathcal{A}_{k:N} \\
Y_{k+1} \in \mathcal{Y}}}&J^k[A_{k:N},R_k,R_{k+1}] \\
\mbox{subject to} & A_{k+1:N}(t) \in\Gamma_{k+1}, \quad t\in[0,T] \mbox{ a.e.} \\
& Y_{k+1}(t) \in(\partial\hat{\phi}_{k+1})(A_{k+1}(t)+\cdots +A_{N}(t)), \quad t\in[0,T] \mbox{ a.e.},
\end{array}\]
where 
\begin{equation} \label{comp_it}
R_{k+1} = \mathbf{w}_{k+1}^* + \int_0^T (A_{k+1}+\cdots A_{N})Y_{k+1} -\mu_{k+1}(A_{k+1:N})dt - \int_0^T Y_{k+1} dX_{k+1},
\end{equation}
given $\Gamma_{k+1}\subseteq\mathbb{R}^{N-k}$, $\hat{\phi}_{k+1}:\mathbb{R}\to\mathbb{R}$ concave, and
\begin{equation}\nonumber
\begin{split}
\mu_{k+1}(A_{k+1:N}) & :=  
\sum_{\ell = k+1}^N \gamma_{k+1, j} r^{j}(A_{j})\\
\mathbf{w}_{k+1}^* & :=  \sum_{j = k+1}^N \gamma_{k+1, j} \bold{w}_{j}
\end{split}
\end{equation}
with
$\gamma_{k+1,k+1}  :=  1$ and
$\gamma_{k+1,j}  :=  \prod_{\ell=k+1}^{j-1}\beta_\ell$, $j= k+2, \cdots,  N$.

We next fix $R_{k-1}$ and focus on the analogous optimization problem for Player $k-1$. Player $k-1$ chooses $(R_k,\cdots ,R_{N})$ and $(A_{k-1},\cdots ,A_{N})$ subject to incentive compatibility and individual rationality for Player $k$ through Player $N$. By the inductive hypothesis, incentive compatibility and individual rationality for Players $k+1, \cdots, N$ is equivalent to considering $R_{k+1}$ of the form \eqref{comp_it}.
Therefore, we can write the optimization problem of Player $k-1$ as
\[\begin{array}{rl}
\sup\limits_{\substack{A_{k-1:N} \in \mathcal{A}_{k-1:N} \\ R_k \in \mathcal{R}_k}}&J^{k-1}[A_{k-1:N},R_{k-1},R_k] \\
\mbox{subject to} & J^k[A_{k:N},R_k,R_{k+1}]\geq J^k[A_{k:N}',R_k,R_{k+1}] \\& \hspace{0.2in} \forall A_{k:N}' \in \mathcal{A}_{k:N} \cup \{\bold{A}_{k:N}: [0,T] \to \mathbb{R}^{N-k+1} \: | \:
\bold{A}_{k:N}(t) \in \mathbb{R}\times\Gamma_{k+1}, t \in [0,T] \mbox{ a.e.} \}\\
& J^k[A_{k:N},R_k,R_{k+1}] \geq \bold{w}_k\\
& A_{k:N}(t) \in\mathbb{R}\times\Gamma_{k+1}, \quad t\in[0,T] \mbox{ a.e.}
\end{array}\]
We note that it is now in the form of a generalized principal-agent problem.
Furthermore, with this choice of $R_{k+1}$, we can re-write the utility for Player $k$ as
\begin{equation}\nonumber
\begin{split}
J^k[A_{k:N},R_k,R_{k+1}] & :=  \mathbb{E}^A\left[\int_0^T r^k(A_k)dt + R_k + q^k(R_{k+1})\right]\nonumber\\
&\; =  \mathbb{E}^A\left[\int_0^T r^k(A_k)dt + R_k - \beta_k R_{k+1}\right]\nonumber\\
& \; =  \mathbb{E}^A\left[-\beta_k \bold{w}_{k+1}^* + \int_0^T r^k(A_k)+\beta_k\mu_{k+1}(A_{k+1:N}) dt + R_k\right].\nonumber
\end{split}
\end{equation}
Define
\[\mu_k(A_{k:N}) := r^k(A_k)+\beta_k\mu_{k+1}(A_{k+1:N}).\]
By Theorem \ref{thm:gen}, it is equivalent for Player $k-1$ to solve:
\[\begin{array}{rl}
\sup\limits_{\substack{A_{k-1:N} \in \mathcal{A}_{k-1:N}\\ Y_k \in \mathcal{Y}}}&J^{k-1}[A_{k-1:N},R_{k-1},R_k] \\
\mbox{subject to} & A_{k:N}(t) \in\Gamma_k\subseteq\mathbb{R}^{N-k+1}, \quad t \in [0,T] \mbox{ a.e.}\\
& Y_k(t) \in(\partial\hat{\phi}_k)(A_k(t) +\cdots +A_{N}(t)), \quad t \in [0,T] \mbox{ a.e.},
\end{array}\]
where 
\[R_k := \bold{w}_k + \beta_k \bold{w}_{k+1}^* + \int_0^T (A_k+\cdots +A_{N})Y_k - \mu_k(A_{k:N})dt - \int_0^T Y_k dX_k,\]
 $\hat{\phi}_k(A_k+\cdots + A_{N})$ is the $(A_k+\cdots +A_{N},\mathbb{R}\times\Gamma_{k+1})$-concave envelope of $\mu_k$, and $\Gamma_k\subseteq\mathbb{R}^{N-k+1}$ is the corresponding touching subset.

By induction, we conclude that it is equivalent for Player 0 to solve
the following optimization problem:
\[\begin{array}{rl}
\sup\limits_{A \in \mathcal{A}, Y_1 \in \mathcal{Y}}&J^0[A,R_1] \\
\mbox{subject to} & A(t) \in\Gamma_1\subseteq\mathbb{R}^{N}, \quad t \in [0,T] \mbox{ a.e.}\\
& Y_1(t) \in(\partial\hat{\phi}_1)(A_1(t) +\cdots +A_{N}(t)), \quad t \in [0,T] \mbox{ a.e.},
\end{array}\]
where 
\[R_1 = \mathbf{w}_1^* +\int_0^T (A_1+\cdots +A_{N})Y_1-\mu_1(A)dt - \int_0^T Y_1 dX_1,\]
where $\mu_1$ and $\bold{w}_1^*$ are given by \eqref{mu1}, $\hat{\phi}_1$ is $(A_1 + \cdots + A_N, \mathbb{R} \times \Gamma_2)$-concave envelope of $\mu_1$, and $\Gamma_1\subseteq\mathbb{R}^{N}$ is the corresponding touching set.
\end{proof}

\begin{remark}Although we do not explicitly state the form of $R_2,\cdots ,R_N$ in Theorem~\ref{thm:Contract}, the details of the proof reveal that they will each be as follows:
\[R_k := \mathbf{w}_k^*+\int_0^T \left(A_k+\cdots+A_N\right)Y_k-\mu_k\left(A_{k:N}\right)dt-\int_0^T Y_k dX_k\]
where
\[\mu_k(A_{k:N}):=\sum_{j=k}^N \gamma_{k,j}r^j(A_j),\hspace{0.5cm}
\mathbf{w}_k^*:=\sum_{j=k}^N \gamma_{k,j}\mathbf{w}_j\]
with
$\gamma_{k,k} := 1$,
$k = 1, \cdots, N$
and
$\gamma_{i,j} := \prod_{\ell =i}^{j-1}\beta_\ell$, $1\leq i<j\leq N$,
and
\begin{equation} \label{y_process}
Y_k(t)\in(\partial\hat{\phi}_k)\left(A_k(t)+\cdots +A_N(t)\right),
\end{equation}
 with $\hat{\phi}_k:\mathbb{R}\to\mathbb{R}$ defined in Theorem \ref{thm:Contract}. The particular choice of $Y_2,\cdots ,Y_N$ will not affect the utility for Player 0, whenever \eqref{y_process} holds,
 because it only depends upon $R_1$.
The function $\mu_k$ represents the cost that Player $k-1$ must pay for incentivizing the hierarchy from Player $k$ to Player $N$ to follow the suggested effort $A_{k:N}$.
The scalar constant $\bold{w}_k^*$ represents a minimum expected utility that Player $k-1$ must provide to satisfy the individuality rationality constraints for Players $k, \cdots, N$.
By using the argument in Theorem \eqref{thm:Contract}, we can confirm that $R_k$ is $\mathcal{F}^{X_k}(T)$-measurable. 
Therefore, an optimal contract obtained by the proposed method is implementable. 
Furthermore, it is verifiable because an optimal $A_k$ is adapted to $\mathcal{F}^{X_k}$.
\end{remark}

We refer to the components $A_{1:N}$ and $R_{2:N}$ as `suggested' efforts and compensations because Player 0 does not have to force the other players to choose them -- we have shown it is optimal for players lower in the hierarchy to construct these contracts on their own. For this reason, we do not explicitly write them down in the theorem statement.

At this point, Player 0 can solve this equivalent optimization problem via the dynamic programming principle and  an associated Hamilton-Jacobi-Bellman equation.
Player 0's optimization problem has a running cost associated with the choice of $A$ and $Y$, as well as a terminal compensation associated with $R_1$. Then, it is natural to introduce a state variable $W$ with the property that $W(T)=R_1$:
\begin{definition}
Given a choice of processes $(A,Y)$, we define the \emph{continuation value} of the aggregate agents as
\begin{equation}
\begin{split}
W(t) & :=  \mathbf{w}_1^* + \int_0^t (A_1+\cdots +A_{N})Y-\mu_1(A)ds - \int_0^t Y dX_1\nonumber\\
& =  \mathbf{w}_1^* - \int_0^t \mu_1(A)ds - \int_0^t \sigma Y dB.\nonumber
\end{split}
\end{equation}
Then, $W(T)=R_1$.
\end{definition}

\begin{remark}The term `continuation value' refers to the characterization of $W(t)$ as the expected remaining utility for Player 1 at time $t$ conditioned on the information up to $t$, assuming he follows the suggested effort. In particular, we have
\[W(t) = R_1 + \int_t^T \mu_1(A)ds + \int_t^T \sigma Y dB.\]
Therefore,
\[W(t) = \mathbb{E}^A_t\left[\int_t^T\mu_1(A)ds + R_1\right].\]
\end{remark}
By viewing $W$ as a state variable of a (stochastic) dynamical system, we can now apply the dynamic programming principle to solve the reformulated optimization problem \eqref{reform}.
\begin{theorem}\label{thm:HJB}Define the value function for Player 0 as
\[v(\bm{w},t) := \sup_{\substack{A\in \mathcal{A}^*\\ Y\in\mathcal{Y}^*}}\mathbb{E}_t\left[\int_t^T \sum_{k=1}^{N}A_k(s) ds + q^0(W(T))\mid W(t) = \bm{w}\right].
\]
Then, $v$ is the unique viscosity solution to the following Hamilton-Jacobi-Bellman equation:
\[v_t + \sup_{\substack{a\in\Gamma_1 \\y\in(\partial\hat{\phi}_1)(1^\top a)}}\left[\frac{1}{2}y^2\sigma^2 v_{\bm{w}\bm{w}} -\mu_1 (a) v_{\bm{w}} + 1^\top a\right] = 0,\]
with terminal condition $v(\bm{w},T)=q^0(\bm{w})$.\end{theorem}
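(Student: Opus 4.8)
\noindent The plan is to read Theorem~\ref{thm:HJB} as the dynamic-programming characterization of a \emph{standard} one-dimensional stochastic control problem and to prove it by the classical viscosity program: (i)~establish the dynamic programming principle (DPP); (ii)~deduce from it that $v$ is simultaneously a viscosity sub- and supersolution of the stated Hamilton--Jacobi--Bellman equation; and (iii)~obtain uniqueness from a comparison principle. The structural feature that makes all three steps go through is that the controlled state $W$ has \emph{autonomous} coefficients. Writing the admissible control set as
\[
K := \left\{ (a,y)\ :\ a\in\Gamma_1,\ y\in(\partial\hat{\phi}_1)(1^\top a) \right\},
\]
the drift $-\mu_1(a)$, the diffusion coefficient $-\sigma y$, and the running reward $1^\top a$ all depend only on the control $(a,y)\in K$ and not on the current value $\bm w$ of the state. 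Consequently the Hamiltonian
\[
H(p,X) := \sup_{(a,y)\in K}\left[ \tfrac12\sigma^2 y^2 X - \mu_1(a)\,p + 1^\top a \right]
\]
carries no explicit $\bm w$-dependence; it is convex in $(p,X)$ as a supremum of affine maps and nondecreasing in $X$ since $\tfrac12\sigma^2 y^2\ge 0$, so the equation $v_t + H(v_{\bm w},v_{\bm w\bm w}) = 0$ is degenerate parabolic in the sense required by the viscosity theory.

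First I would prove the DPP: for any stopping time $\tau\in[t,T]$,
\[
v(\bm w,t) = \sup_{(A,Y)}\mathbb{E}_t\!\left[ \int_t^\tau 1^\top A(s)\,ds + v\big(W(\tau),\tau\big) \right],
\]
where $W$ solves $dW = -\mu_1(A)\,dt - \sigma Y\,dB$ with $W(t)=\bm w$ and $(A(s),Y(s))\in K$ a.e. This follows from the flow property of the state equation together with a measurable-selection argument that pastes $\varepsilon$-optimal controls after $\tau$; the coupled constraint $y\in(\partial\hat{\phi}_1)(1^\top a)$ is handled by selecting $Y$ as a measurable selection from the super-gradient correspondence $a\mapsto(\partial\hat{\phi}_1)(1^\top a)$, which is closed-graph and convex-valued because $\hat{\phi}_1$ is concave. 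Given the DPP, the viscosity properties follow from the standard test-function argument: for $\varphi\in C^{1,2}$ such that $v-\varphi$ has a local maximum (resp.\ minimum) at an interior point $(\bm w,t)$, normalized so $v(\bm w,t)=\varphi(\bm w,t)$, I apply It\^o's formula to $\varphi(W(\cdot),\cdot)$ on $[t,t+h]$ and combine with the DPP. Using an $\varepsilon$-optimal control gives the subsolution inequality $\varphi_t + H(\varphi_{\bm w},\varphi_{\bm w\bm w})\ge 0$; freezing a constant control $(a,y)\in K$ and then taking the supremum over $K$ gives the supersolution inequality $\varphi_t + H(\varphi_{\bm w},\varphi_{\bm w\bm w})\le 0$; in both cases one divides by $h$ and lets $h\downarrow 0$. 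The terminal condition $v(\bm w,T)=q^0(\bm w)$ is immediate from the definition of $v$, since at $t=T$ the running integral is empty and $W(T)=\bm w$.

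For uniqueness I would invoke a comparison principle: any bounded upper-semicontinuous subsolution $u$ and bounded lower-semicontinuous supersolution $\underline v$ with $u(\cdot,T)\le\underline v(\cdot,T)$ satisfy $u\le\underline v$ on $[0,T]\times\mathbb{R}$, so that $v$ is the unique continuous viscosity solution. The decisive simplification here is precisely the $\bm w$-independence of $H$: the usual obstruction in comparison proofs---the modulus-of-continuity estimate controlling how the coefficients differ across the two doubled spatial variables---is absent, so the doubling-of-variables argument with Ishii's lemma closes using only the monotonicity of $H$ in $X$ (degenerate ellipticity) together with its continuity in $(p,X)$.

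The genuinely delicate part will be the \emph{well-posedness and regularity} inputs that this machinery silently presumes, all of which are governed by the geometry of $K$. Two points need care. First, $\Gamma_1$ and the correspondence $a\mapsto(\partial\hat{\phi}_1)(1^\top a)$ may fail to be compact, so one must verify a coercivity/growth condition guaranteeing that $H(p,X)$ and $v$ are finite and that $v$ has the growth placing it in the comparison class; finiteness of the $\tfrac12\sigma^2 y^2 X$ term in $H$ in particular hinges on the sign of $v_{\bm w\bm w}$, which is where the paper's concavity-in-the-continuation-value theme re-enters, since concavity of $q^0$ is what propagates to concavity of $v$ in $\bm w$ and hence keeps the supremum over $y$ finite. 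Second, the continuity of $v$---needed both for the viscosity characterization to be sharp and for the terminal data to be attained continuously---must be established rather than assumed, via a priori estimates on $W$ using the linear-growth structure of $\mu_1$ and $q^0$ and the closedness of $K$. Once these regularity facts are secured, steps (i)--(iii) assemble into the stated characterization.
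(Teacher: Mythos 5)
Your proposal is correct and follows essentially the same route as the paper, whose entire proof of this theorem is the one-line citation ``this is a standard dynamic programming argument which may be found, for example, in \cite{FlemingSoner2006,Weber2011}''---i.e.\ precisely the DPP/viscosity-sub-and-supersolution/comparison program you outline. Your closing paragraph on the non-compactness of $\Gamma_1$ and the super-gradient correspondence, and on the growth and continuity estimates needed to place $v$ in the comparison class, identifies genuine hypotheses that the paper leaves implicit and that would need to be verified for the cited standard results to apply.
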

\begin{proof}
This is a standard dynamic programming argument which may be found, for example, in \cite{FlemingSoner2006,Weber2011}.
\end{proof}

If there exists a smooth enough solution to this HJB, then by standard stochastic control arguments, we can construct optimal feedback controls:
\[A(t):=A^*(W(t),t), \quad Y(t):=Y^*(W(t),t).\]
Then, using these feedback controls and the dynamics of $W(t)$, ever player in the hierarchy can compute $W$ in terms of observed values. Specifically, we have
\begin{equation} \nonumber
\begin{split}
W(t) & =  \mathbf{w}_1^* + \int_0^t \left (\sum_{i=1}^N A^*_i(W(s),s)\right)Y^*(W(s),s)-\mu\left(A^*(W(s),s)\right)ds - \int_0^t Y^*(W(s),s) dX_1\\
& =  \mathbf{w}_1^* + \int_0^t \left (\sum_{i=2}^N A^*_i(W(s),s)\right) Y^*(W(s),s)-\mu\left(A^*(W(s),s)\right)ds - \int_0^t Y^*(W(s),s) dX_2\\
& \:\: \vdots  \\
& =  \mathbf{w}_1^* + \int_0^t A^*_N(W(s),s)Y^*(W(s),s)-\mu\left(A^*(W(s),s)\right)ds - \int_0^t Y^*(W(s),s) dX_N\\
& =  \mathbf{w}_1^* - \int_0^t \mu\left(A^*(W(s),s)\right)ds - \int_0^t Y^*(W(s),s) dB.
\end{split}
\end{equation}
Namely, all three processes -- $A$, $Y$, and $W$ -- are adapted to $\mathcal{F}^{X_1},\cdots ,\mathcal{F}^{X_N}$, and $\mathcal{F}^B$.
Therefore, the proposed contract is implementable and verifiable.

Player 0 then agrees to pay Player 1 an amount $R_1 = W(T)$, where $W$ is evolved via the optimal feedback controls. Player 0 announces the contract and suggested efforts and compensations.
Then, we have shown that it is optimal for the rest of the principal-agent hierarchy to follow $A = (A_1, \cdots, A_N)$ and $R_{2:N} = (R_2, \cdots, R_N)$.

One major advantage of this hierarchical model is the dimensionality of the resulting PDE -- only one-dimensional in space. In comparison, the model with multiple agents reporting to a single principal in \cite{EvansMiller2015} resulted in a PDE with one state variable for each agent. Therefore, this hierarchical model is very tractable for even large $N$. It is worth noting, however, that this particular dimensionality result depend  on the risk-neutrality of terminal payoffs between agents in the hierarchy.
Another important feature of the proposed approach is that $\Gamma_1$ (and $\Gamma_k$'s) can be parameterized by a homeomorphism whose domain is one-dimensional when the players' utility functions are strictly concave.
This feature yields the maximization problem in the Hamiltonian of the HJB equation to be one-dimensional  at each time. 
This additional dimensionality reduction result will be provided in Theorem \ref{thm:homeo}.

\section{Details on $(f,\Gamma)$-Concave Envelopes} \label{sec:envelope}

In the previous iterative construction of an incentive compatible contract for the principal-agent hierarchy, the key condition for iteration is $(f, \Gamma)$-concavity. A simpler version of this condition was first highlighted in \cite{EvansMiller2015}, but details were not provided on how to check $(f, \Gamma)$-concavity or construct appropriate envelopes.

In this section, we provide a general construction of $(f,\Gamma )$-concave envelopes $\hat{g}$ of a function $g$, and also provide structural results about the subset where $g=\hat{g}$.

\subsection{Constructing $(f,\Gamma )$-Concave Envelopes}

Recall from the definition in Section 3 that the key properties of the $(f,\Gamma )$-concave envelope are (1) $f$-concavity, (2) majorizing the function $g$ over $\Gamma$, and (3) minimality. All three properties are intuitively represented by a step in the construction in this section.

We construct the $(f,\Gamma )$-concave envelope of $g$ in the following three steps:
\begin{enumerate}
\item Define $\phi(y):=\sup\left\{g(x)\mid x\in\Gamma\text{ s.t. }f(x)=y\right\}$, where we set $\phi(y):=-\infty$ if $f^{-1}(\left\{y\right\})\cap\Gamma=\emptyset$.
\item Compute the concave envelope $\hat{\phi}$ of $\phi$, then
\item Define $\hat{g}(x)=\hat{\phi}(f(x))$.
\end{enumerate}
An example of the proposed construction is shown in Figure  \ref{fig:construction}.

\begin{figure}[tb] 
\begin{center}
\includegraphics[width =3in]{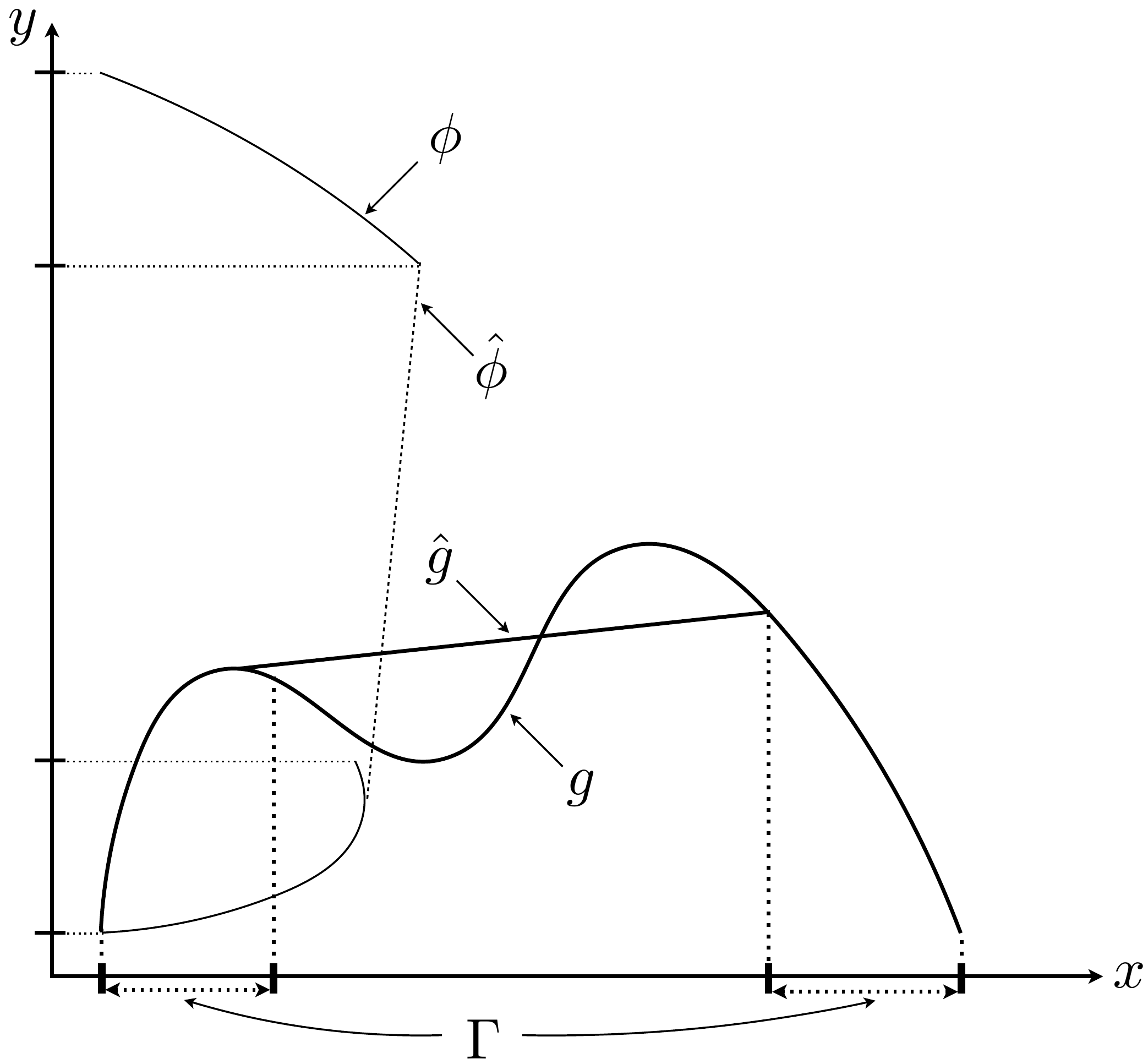}
\caption{An example of constructing $(f, \Gamma)$-concave envelope of $g$ when $f(x) = x$.}
 \label{fig:construction}
 \end{center}
\end{figure}

\begin{proposition}For any $g:\mathbb{R}^m\to\mathbb{R}$, the function $\hat{g}$, as constructed above, is the $(f,\Gamma )$-concave envelope of $g$.\end{proposition}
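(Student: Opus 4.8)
The plan is to verify the three defining properties of the $(f,\Gamma)$-concave envelope in the order they are listed, exploiting the fact that the construction factors entirely through the auxiliary function $\phi$ on $\mathbb{R}^n$ and its ordinary concave envelope $\hat{\phi}$. The first two properties are essentially immediate. For \emph{$f$-concavity}, note that by construction $\hat{g} = \hat{\phi}\circ f$ with $\hat{\phi}$ the concave envelope of $\phi$, hence concave on $\mathbb{R}^n$; so $\hat{g}$ is $f$-concave directly from the definition. For \emph{majorization over $\Gamma$}, I would fix $x\in\Gamma$ and set $y = f(x)$. Since $x$ belongs to the fiber $\{x'\in\Gamma : f(x')=y\}$, the defining supremum gives $\phi(y)\geq g(x)$, and because a concave envelope dominates its underlying function, $\hat{\phi}(y)\geq\phi(y)$. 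Composing, $\hat{g}(x)=\hat{\phi}(f(x))\geq g(x)$.

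The substantive step is \emph{minimality}. Let $\tilde{g}$ be any $f$-concave function with $\tilde{g}(x)\geq g(x)$ for all $x\in\Gamma$, and write $\tilde{g}=\tilde{\phi}\circ f$ for some concave $\tilde{\phi}:\mathbb{R}^n\to\mathbb{R}$. The key is a \emph{lifting} argument that converts the hypothesis on $\mathbb{R}^m$ into a statement about $\phi$ on $\mathbb{R}^n$: for each $y$ in the image $f(\Gamma)$ and each $x\in\Gamma$ with $f(x)=y$ we have $\tilde{\phi}(y)=\tilde{g}(x)\geq g(x)$, and taking the supremum over the fiber yields $\tilde{\phi}(y)\geq\phi(y)$. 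For $y\notin f(\Gamma)$ we have $\phi(y)=-\infty\leq\tilde{\phi}(y)$ trivially, so $\tilde{\phi}\geq\phi$ on all of $\mathbb{R}^n$. Since $\hat{\phi}$ is by construction the smallest concave function majorizing $\phi$, and $\tilde{\phi}$ is a concave majorant of $\phi$, minimality of the concave envelope forces $\tilde{\phi}\geq\hat{\phi}$ pointwise. Restricting to $y=f(x)$ for $x\in\Gamma$ then gives $\tilde{g}(x)=\tilde{\phi}(f(x))\geq\hat{\phi}(f(x))=\hat{g}(x)$, which is exactly the required minimality.

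I expect the lifting step in the minimality argument to be where the particular fiberwise-supremum definition of $\phi$ earns its keep: it is precisely engineered so that domination of $g$ on $\Gamma$ becomes domination of $\phi$, which is the input that the classical concave-envelope minimality needs. The remaining difficulties are technical rather than conceptual. Since the statement asserts $\hat{g}:\mathbb{R}^m\to\mathbb{R}$ is real-valued, I would need to handle the extended-real values carefully: $\phi$ takes the value $-\infty$ off $f(\Gamma)$, so ``concave envelope of $\phi$'' must be read as the smallest concave majorant (equivalently, the closure of the concave hull of the hypograph of $\phi$), and one must assume or verify that $\phi$ is bounded above by an affine function so that $\hat{\phi}$ is finite and $\hat{g}$ indeed maps into $\mathbb{R}$. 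With that well-definedness in hand, the three verifications above close the argument.
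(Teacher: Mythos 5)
Your argument is correct and is essentially the paper's proof: both verify $f$-concavity and majorization directly from the construction, and both establish minimality by lifting the domination $\tilde{g}\geq g$ on $\Gamma$ to $\tilde{\phi}\geq\phi$ on all of $\mathbb{R}^n$ via the fiberwise supremum (using $\phi=-\infty$ off $f(\Gamma)$) and then invoking minimality of the ordinary concave envelope $\hat{\phi}$. The only differences are cosmetic — the paper phrases minimality as a proof by contradiction while you argue directly, and you add a useful caveat about finiteness of $\hat{\phi}$ that the paper leaves implicit.
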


\begin{proof}
By construction, $\hat{g}=\hat{\phi}\circ f$ is $f$-concave and majorizes $g$ over $\Gamma$. Suppose there exists $\tilde{g}=\tilde{\phi}\circ f$ which is $f$-concave, majorizes $g$ over $\Gamma$, and is smaller than $\hat{g}$ at some point $x_0\in\Gamma$, i.e.,
\begin{equation} \label{ob1}
\tilde{\phi}(f(x_0)) < \hat{\phi}(f(x_0)).
\end{equation}
Because $\tilde{g}$ majorizes $g$ over $\Gamma$, we have
\begin{equation}\nonumber
\begin{split}
\phi(f(x)) & =  \sup\left\{g(y)\mid y\in\Gamma\text{ s.t. }f(x)=f(y)\right\}\nonumber\\
&\leq \sup\{\tilde{\phi}(f(y))\mid y\in\Gamma\text{ s.t. }f(x)=f(y)\}\nonumber\\
& = \tilde{\phi}(f(x))\hspace{0.5cm}\forall x\in\Gamma .\nonumber
\end{split}
\end{equation}
Then, recalling that $\phi(y)=-\infty$ when $f^{-1}(\left\{y\right\})\cap\Gamma =\emptyset$, we conclude that
\[\phi(y)\leq\tilde{\phi}(y)\hspace{0.5cm}\forall y\in\mathbb{R}^n,\]
which contradicts \eqref{ob1} because
$\hat{\phi}$ is the concave envelope of $\phi$.
\end{proof}

\subsection{Sufficient Conditions for $\Gamma$ to be One-Dimensional}

In this section we prove that if all players have utilities which are strictly concave in effort, then $\Gamma_1\subseteq\mathbb{R}^{N}$ for Player 0 is parameterized by a homeomorphism $\psi:\mathbb{R}\to\Gamma_1$.
The significance of this result is that it provides dimensionality reduction in Player 0's optimization problem. In particular, rather than maximizing over a potentially $N$-dimensional subset $\Gamma_1\subseteq\mathbb{R}^{N}$, Player 0 can maximize over values in $\mathbb{R}$ using the map $\psi$.
In the lack of strict concavity, the dimension of $\Gamma_1$ is not necessarily reduced.

First, we prove an abstract result.
\begin{lemma}\label{lem:cont}
Let $f:\mathbb{R}^2\to\mathbb{R}$ be continuous. Suppose that $x\mapsto f(x,y)$ is strictly concave for each $y$. Then, the function $T:\mathbb{R}\to\mathbb{R}$ defined as
\[T(y) := \argmax_{x \in \mathbb{R}} f(x,y)\]
is continuous.\end{lemma}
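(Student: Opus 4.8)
The plan is to establish continuity through its sequential characterization: fix an arbitrary $y \in \mathbb{R}$ and a sequence $y_n \to y$, and show that $T(y_n) \to T(y)$. Write $x^* := T(y)$, which is well-defined and unique precisely because $x \mapsto f(x,y)$ is strictly concave. The core of the argument is a sandwiching estimate that localizes each $T(y_n)$ inside a prescribed neighborhood of $x^*$.

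First I would exploit strict concavity of the limiting slice. For any $\delta > 0$, strict concavity of $x \mapsto f(x,y)$ together with the fact that $x^*$ is its unique maximizer forces $f(x^* - \delta, y) < f(x^*, y)$ and $f(x^* + \delta, y) < f(x^*, y)$. Setting $\epsilon := \tfrac{1}{2}\big(f(x^*,y) - \max\{f(x^*-\delta,y),\, f(x^*+\delta,y)\}\big) > 0$, I would then invoke continuity of $f$ at the three points $(x^*, y)$ and $(x^* \pm \delta, y)$ to produce an $N$ such that for all $n \geq N$ one has $f(x^*, y_n) > f(x^*, y) - \epsilon$ and $f(x^* \pm \delta, y_n) < f(x^* \pm \delta, y) + \epsilon$; by the choice of $\epsilon$ this yields $f(x^* - \delta, y_n) < f(x^*, y_n)$ and $f(x^* + \delta, y_n) < f(x^*, y_n)$.

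The key step is to convert these pointwise comparisons into a bound on the maximizer. Since $x \mapsto f(x, y_n)$ is strictly concave, it is strictly increasing to the left of its maximizer and strictly decreasing to the right. Hence if its maximizer $T(y_n)$ satisfied $T(y_n) \geq x^* + \delta$ we would have $f(x^* + \delta, y_n) > f(x^*, y_n)$, and if $T(y_n) \leq x^* - \delta$ we would have $f(x^* - \delta, y_n) > f(x^*, y_n)$ — both contradicting the inequalities just established. Therefore $T(y_n) \in (x^* - \delta, x^* + \delta)$ for every $n \geq N$, i.e. $|T(y_n) - T(y)| < \delta$. Since $\delta > 0$ was arbitrary, $T(y_n) \to T(y)$, and sequential continuity gives continuity of $T$.

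I expect the main obstacle to be the monotonicity claim used in the final step — namely that a strictly concave function on $\mathbb{R}$ is strictly increasing before, and strictly decreasing after, its maximizer. This fact is standard but must be stated carefully, as it is exactly what turns the comparison of values at $x^* \pm \delta$ into the two-sided localization of $T(y_n)$; without strict concavity the maximizer could fail to be unique and the sandwich would collapse. A minor point worth flagging is that well-definedness of $T$ already presumes existence of a maximizer for each slice, which I would take from the hypothesis that $T : \mathbb{R} \to \mathbb{R}$ is given.
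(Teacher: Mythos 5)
Your proof is correct and follows essentially the same argument as the paper's: both use the strict-concavity gap of the limiting slice at $x^*\pm\delta$, transfer the value comparisons to nearby slices by continuity of $f$ at three points, and then invoke concavity of the slice $x\mapsto f(x,y_n)$ to trap its maximizer in $(x^*-\delta,x^*+\delta)$ (the paper phrases this as a proof by contradiction with the roles of $\epsilon$ and $\delta$ interchanged, which is purely cosmetic). Your explicit flagging of the monotonicity fact for strictly concave functions, and of the tacit assumption that each slice attains its maximum, makes the argument if anything slightly more careful than the paper's.
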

\begin{proof}
By strict concavity of $x\mapsto f(x,y)$ for each $y$, we conclude that $T$ is a well-defined function.

Suppose that $T$ is not continuous at $y_0$. Then there exists $\epsilon >0$ and a sequence $y_n\to y_0$ such that $|T(y_n)-T(y_0)|>\epsilon$. Define
\[\delta := f(T(y_0),y_0) - \max\left\{f(T(y_0)+\epsilon,y_0),f(T(y_0)-\epsilon,y_0)\right\} > 0,\]
which is strictly positive by strict concavity in $x$.

By continuity of $f$, we can find $y_n$ such that
\begin{equation}\nonumber
\begin{split}
|T(y_n)-T(y_0)| & >  \epsilon\\
|f(T(y_0),y_n)-f(T(y_0),y_0)| & <  \delta/4\\
|f(T(y_0)+\epsilon,y_n)-f(T(y_0)+\epsilon,y_0)| & <  \delta/4\\
|f(T(y_0)-\epsilon,y_n)-f(T(y_0)-\epsilon,y_0)| & <  \delta/4.
\end{split}
\end{equation}
Now, we verify two inequalities
\begin{equation}
\begin{split}
f(T(y_0),y_n))-f(T(y_0)+\epsilon,y_n) & >  f(T(y_0),y_0))-f(T(y_0)+\epsilon,y_0) - \delta/2\nonumber\\
& \geq  \delta - \delta/2 = \delta/2 > 0,\nonumber\\
f(T(y_0),y_n))-f(T(y_0)-\epsilon,y_n) & >  f(T(y_0),y_0))-f(T(y_0)-\epsilon,y_0) - \delta/2\nonumber\\
& \geq  \delta - \delta/2 = \delta/2 > 0.\nonumber
\end{split}
\end{equation}
But by strict concavity of $x\mapsto f(x,y_n)$, this implies
\[\argmax_{x \in \mathbb{R}} f(x,T(y_n))\in\left[T(y_0)-\epsilon,T(y_0)+\epsilon\right],\]
which contradicts $|T(y_n)-T(y_0)|>\epsilon$.
\end{proof}

Next, we prove the main argument which will be iterated.

\begin{lemma}\label{lem:par}
Let $g:\mathbb{R}^n\to\mathbb{R}$ continuous. Let $\Gamma\subseteq\mathbb{R}^{N}$ be parameterized by a homeomorphism $\psi:\mathbb{R}\to\Gamma$ such that $1^\top\psi(y)=y$ for all $y\in\mathbb{R}$. Suppose that $(y,z)\mapsto g(z,\psi(y-z))$ is jointly concave and, further, that $z\mapsto g(z,\psi(y-z))$ is strictly concave for all $y\in\mathbb{R}$.
Let $f(x_1,\cdots ,x_n) := x_1+\cdots +x_n$, $\hat{g}$ be the $(f,\mathbb{R}\times\Gamma)$-concave envelope of $g$, and the corresponding touching set be
\[\Gamma' := \left\{(x_1,\cdots ,x_n)\in\mathbb{R}\times\Gamma \: | \: g(x_1,\cdots ,x_n)=\hat{g}(x_1,\cdots ,x_n)\right\}.\]
Then, $\Gamma'$ is parameterized by a homeomorphism $\psi':\mathbb{R}\to\Gamma'$ such that $1^\top \psi'(y)=y$ for all $y\in\mathbb{R}$. Furthermore, $y\mapsto g(\psi'(y))$ is concave.\end{lemma}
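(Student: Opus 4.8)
The plan is to compute the intermediate function $\phi$ from the three-step envelope construction explicitly, recognize that it is already concave, and then read the parameterization $\psi'$ directly off its (unique) maximizers. First I would use $\psi$ to describe the fibers of $f=1^\top(\cdot)$ over $\mathbb{R}\times\Gamma$: since $\psi$ is onto $\Gamma$ and $1^\top\psi(w)=w$, a point of $\mathbb{R}\times\Gamma$ whose coordinates sum to $y$ is exactly $(z,\psi(y-z))$ for some $z\in\mathbb{R}$. Hence the first step of the construction yields
\[\phi(y)=\sup_{z\in\mathbb{R}}\,g\bigl(z,\psi(y-z)\bigr).\]
Writing $h(z,y):=g(z,\psi(y-z))$, the hypothesis that $h$ is jointly concave implies, by the standard fact that partial maximization preserves concavity \cite{Boyd2004}, that $\phi$ is concave. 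Therefore the concave envelope in the second step satisfies $\hat\phi=\phi$, and $\hat{g}(x)=\phi(1^\top x)$.

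Next I would construct the candidate parameterization. By strict concavity of $z\mapsto h(z,y)$ the maximizer is unique, so set $T(y):=\argmax_{z\in\mathbb{R}} h(z,y)$; since $h$ is continuous ($g$ and $\psi$ are) and strictly concave in its first argument, Lemma \ref{lem:cont} gives that $T$ is continuous. Define $\psi'(y):=\bigl(T(y),\psi(y-T(y))\bigr)$, which is continuous as a composition of continuous maps and satisfies $1^\top\psi'(y)=T(y)+(y-T(y))=y$.

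Then I would verify that $\psi'$ maps onto $\Gamma'$ and is a homeomorphism. For any $y$ we have $g(\psi'(y))=h(T(y),y)=\phi(y)=\hat{g}(\psi'(y))$, so $\psi'(y)\in\Gamma'$, and the identity $y\mapsto g(\psi'(y))=\phi(y)$ is exactly the concavity claimed at the end. For the bijection, injectivity is immediate from $1^\top\psi'=\mathrm{id}$; for surjectivity, take $x=(z,\psi(w))\in\Gamma'$ and put $y:=1^\top x=z+w$, so that $h(z,y)=g(x)=\hat{g}(x)=\phi(y)=\sup_{z'}h(z',y)$, whence strict concavity forces $z=T(y)$ and $x=\psi'(y)$. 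Since $1^\top|_{\Gamma'}$ is then a continuous two-sided inverse of $\psi'$, the map $\psi'$ is a homeomorphism.

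The main obstacle is the well-definedness of $T$: strict concavity guarantees the maximizer is unique but not that the supremum defining $\phi(y)$ is attained, nor even that $\phi$ is finite. I would address this exactly as Lemma \ref{lem:cont} implicitly does, treating finiteness and attainment of $\phi$ as standing regularity; in the intended application $\mu_k$ is a positive combination of the strictly concave (and suitably coercive) utilities $r^j$, which forces the maximum to be attained. Everything else is bookkeeping: the one substantive input is that partial maximization of a jointly concave function is concave, since this is precisely what collapses the envelope ($\hat\phi=\phi$) onto the graph parameterized by $\psi'$.
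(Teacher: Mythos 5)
Your proposal is correct and follows essentially the same route as the paper's proof: parameterize the fiber $\{f=y\}\cap(\mathbb{R}\times\Gamma)$ by $z\mapsto(z,\psi(y-z))$, invoke Lemma \ref{lem:cont} for continuity of $T$, use the fact that partial maximization of a jointly concave function is concave to get $\hat\phi=\phi$, and define $\psi'(y)=(T(y),\psi(y-T(y)))$. Your explicit two-sided-inverse argument for the homeomorphism and your remark about attainment of the supremum defining $T$ are, if anything, slightly more careful than the paper's, which asserts attainment directly from strict concavity and concludes the homeomorphism from $\psi'$ being ``one-to-one and an open map.''
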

\begin{proof}
We note that for every $y\in\mathbb{R}$, we can parameterize the set
\[\left\{(x_1,\cdots ,x_n)\in\mathbb{R}\times\Gamma \: | \:f(x_1,\cdots ,x_n)=y\right\}\subseteq\mathbb{R}^n\]
by the map $z\mapsto\left(z,\psi(y-z)\right)$. By the strict concavity of this map, we apply Lemma \ref{lem:cont} to conclude that the map
\[T(y) := \argmax_{z \in \mathbb{R}} g\left(z,\psi(y-z)\right)\]
is continuous.

Next, we compute
\begin{equation}\nonumber
\begin{split}
\phi(y) & :=  \sup\left\{g(x_1,\cdots ,x_n) \: | \: (x_1,\cdots ,x_n)\in\mathbb{R}\times\Gamma,\,f(x_1,\cdots,x_n)=y\right\}\nonumber\\
& \; =  \sup \left\{  g\left(z,\psi(y-z)\right) \: | \:
{z \in \mathbb{R}} \right \}.\nonumber
\end{split}
\end{equation}
By strict concavity of $z\mapsto g(z,\psi(y-z))$, we conclude that the supremum is attained at a single point for each $y$. The pointwise supremum of the jointly concave map $(y,z)\mapsto g(z,\psi(y-z))$ is concave, as shown in \cite{Boyd2004}, so $y\mapsto\phi(y)$ is concave. Therefore, $\phi = \hat{\phi}$, and $\hat{g}=\phi\circ f$.

Define a new function $\psi':\mathbb{R}\to\mathbb{R}\times\Gamma$ as:
\[\psi'(y) := \left(T(y),\psi\left(y-T(y)\right)\right).\]
By continuity of $T$, $\psi'$ is continuous. We can also check, $\phi(y) = g(\psi'(y))$, so $y\mapsto g(\psi'(y))$ is concave for all $y\in\mathbb{R}$.

We claim that the image of $\psi'$ is $\Gamma'$. By definition,
\begin{equation}
\begin{split}
\Gamma' & :=  \left\{(x_1,\cdots ,x_n)\in\mathbb{R}\times\Gamma \: | \: g(x_1,\cdots ,x_n)=\hat{g}(x_1,\cdots ,x_n)\right\}\nonumber\\
& \; =  \left\{(x_1,\cdots ,x_n)\in\mathbb{R}\times\Gamma \: | \: g(x_1,\cdots ,x_n)=\phi\circ f(x_1,\cdots ,x_n)\right\}\nonumber\\
& \;=  \bigcup_{y \in \mathbb{R}}\left\{(x_1,\cdots,x_n)\in\mathbb{R}\times\Gamma,\,f(x_1,\cdots ,x_n) = y \mid g(x_1,\cdots ,x_n)=\phi(y)\right\}\nonumber\\
& \;=  \bigcup_{y \in \mathbb{R}} \left\{(T(y),\psi(y-T(y)))\right\}\nonumber\\
& \;=  \text{Im}(\psi').\nonumber
\end{split}
\end{equation}
Finally, we confirm that
\[1^\top\psi'(y) = T(y)+1^\top\psi(y-T(y))=y,\]
which implies $\psi'$ is one-to-one and an open map, so we conclude $\psi':\mathbb{R}\to\Gamma'$ is a homeomorphism.
\end{proof}

The intuition of the following main result is to apply Lemma \ref{lem:par} iteratively. At each step, $g$ represents the effective utility of Player $k$ over $A_{k:N}$. By induction, we assume $\Gamma_{k+1}$ is one-dimensional and parametrized by a homeomorphism $\psi_{k+1}$, so Player $k$ effectively maximizes over $\mathbb{R}^2$. We show under concavity assumptions that $\Gamma_k$ is then once again one-dimensional.

\begin{theorem} \label{thm:homeo}
If each player's utility function is strictly concave and $\beta_k\geq 0$, then $\Gamma_1\subseteq\mathbb{R}^{N}$ is parameterized by a homeomorphism $\psi_1:\mathbb{R}\to\Gamma_1$.\end{theorem}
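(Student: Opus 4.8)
The plan is a downward induction on $k$, from $k=N$ to $k=1$, applying Lemma \ref{lem:par} once at each stage to pass from $\Gamma_{k+1}$ to $\Gamma_k$. The key to making this work is to choose a strong enough inductive hypothesis: I will carry not only that $\Gamma_{k+1}\subseteq\mathbb{R}^{N-k}$ is parameterized by a homeomorphism $\psi_{k+1}:\mathbb{R}\to\Gamma_{k+1}$ with $1^\top\psi_{k+1}(y)=y$, but \emph{also} that $y\mapsto\mu_{k+1}(\psi_{k+1}(y))$ is concave. This extra concavity clause is exactly what feeds the joint-concavity hypothesis of Lemma \ref{lem:par} at the next stage, and since Lemma \ref{lem:par} returns precisely this concavity as one of its conclusions, the induction is self-sustaining.

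For the base case $k=N$, strict concavity of $r^N$ implies that $r^N$ equals its own concave envelope $\hat{\phi}_N$, so the touching set is all of $\Gamma_N=\mathbb{R}$. I take $\psi_N=\mathrm{id}_{\mathbb{R}}$, which trivially satisfies $1^\top\psi_N(y)=y$, and $y\mapsto\mu_N(\psi_N(y))=r^N(y)$ is concave, establishing the full hypothesis at level $N$.

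For the inductive step I apply Lemma \ref{lem:par} with $g=\mu_k$, dimension $n=N-k+1$, $f(x_1,\dots,x_n)=x_1+\cdots+x_n$ (which is $A_k+\cdots+A_N$), $\Gamma=\Gamma_{k+1}$, and $\psi=\psi_{k+1}$; with these choices the lemma's $\hat{g}$ and $\Gamma'$ coincide with the $\hat{\phi}_k\circ f$ and $\Gamma_k$ produced in Theorem \ref{thm:Contract}. The function $\mu_k$ is continuous, being a finite linear combination of the continuous functions $r^j$. It remains to check the two concavity hypotheses of Lemma \ref{lem:par} for the map $(y,z)\mapsto g(z,\psi_{k+1}(y-z))$. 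Using the recursion $\mu_k(A_{k:N})=r^k(A_k)+\beta_k\mu_{k+1}(A_{k+1:N})$ from the proof of Theorem \ref{thm:Contract}, this map equals $r^k(z)+\beta_k(\mu_{k+1}\circ\psi_{k+1})(y-z)$. The composition $\mu_{k+1}\circ\psi_{k+1}$ is concave by the inductive hypothesis and $(y,z)\mapsto y-z$ is affine, so the second term is jointly concave, and the factor $\beta_k\geq0$ preserves this; the first term $r^k(z)$ is concave jointly and strictly concave in $z$. Hence the whole map is jointly concave and strictly concave in $z$, Lemma \ref{lem:par} applies, and it yields a homeomorphism $\psi_k:\mathbb{R}\to\Gamma_k$ with $1^\top\psi_k(y)=y$ together with concavity of $y\mapsto\mu_k(\psi_k(y))$. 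This re-establishes the hypothesis at level $k$ and, taking $k=1$, proves the theorem.

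I expect the only genuine difficulty to be identifying the correct inductive hypothesis, namely recognizing that the bare statement ``$\Gamma_{k+1}$ is parameterized by a homeomorphism'' must be augmented with concavity of $\mu_{k+1}\circ\psi_{k+1}$, and seeing that $\beta_k\geq0$ is exactly the sign condition that keeps joint concavity intact when it is propagated one level up. A minor bookkeeping point is that the ambient dimension written as $\mathbb{R}^{N}$ in Lemma \ref{lem:par} should be read as $\mathbb{R}^{n-1}$; with $n=N-k+1$ this matches the dimension $N-k$ of $\Gamma_{k+1}$, so there is no actual mismatch.
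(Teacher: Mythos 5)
Your proposal is correct and follows essentially the same route as the paper: downward induction with the strengthened hypothesis that $\psi_{k+1}$ satisfies $1^\top\psi_{k+1}(y)=y$ and that $\mu_{k+1}\circ\psi_{k+1}$ is concave, then one application of Lemma \ref{lem:par} per level using $\beta_k\geq 0$ and strict concavity of $r^k$ to verify its hypotheses. Your bookkeeping remark about the ambient dimension in Lemma \ref{lem:par} is a fair reading of a typo in the paper and does not affect the argument.
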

\begin{proof}
First, recall from the iteration argument that at each stage, $\mu_k:\mathbb{R}^{N-k}\to\mathbb{R}$ is a linear combination of $r^k,\cdots ,r^{N}$ with positive coefficients. Furthermore, at each step, $\Gamma_k$ is the touching set corresponding to the $(A_k+\cdots +A_{N},\mathbb{R}\times\Gamma_{k+1})$-concave envelope of $\mu_k$.

We want to show that for each $k$, $\Gamma_k$ is parameterized by a homeomorphism $\psi_k:\mathbb{R}\to\Gamma_k$ such that $1^\top\psi_k(y)=y$ and $y\mapsto\mu_k(\psi_k(y))$ is concave. We proceed by induction on $k$.

In the base case, we know $\Gamma_{N}=\mathbb{R}$, and this can be parameterized by the identity map $\psi_{N}(y)=y$. The concavity condition holds by concavity of $r^{N}$.

Now, consider the Player $k$'s effective utility function, which is given by
\[\mu_k(A_{k:N}) = r^k(A_k) + \beta_{k+1}\mu_{k+1}(A_{k+1:N}).\]
By the inductive hypothesis, there is a bijection $\psi_{k+1}:\mathbb{R}\to\Gamma_{k+1}$ such that $1^\top\psi_{k+1}(y)=y$ and $y\mapsto\mu_{k+1}(\psi_{k+1}(y))$ is concave.

In particular, we can confirm that
\[\mu_k(z,\psi_{k+1}(y-z)) = r^k(z) + \beta^{k+1}\mu_{k+1}(\psi_{k+1}(y-z))\]
is strictly concave in $z$ and jointly concave in $(y,z)$. The right-hand-side is jointly concave in $(y,z)$ as the composition of an affine function with $\mu_{k+1}\circ\psi_{k+1}$, which is concave by the inductive hypothesis. Because $r^k$ is strictly concave, we determine the sum is strictly concave in $z$ and jointly concave in $(y,z)$.

Furthermore, by the inductive hypothesis $1^\top\psi_{k+1}(y)=y$.
Therefore, we can apply Lemma \ref{lem:par} to conclude the $(A_k+\cdots +A_{N},\mathbb{R}\times\Gamma_{k+1})$-concave envelope of $\mu_k$ has a corresponding touching set $\Gamma_k$ which is parameterized by a homeomorphism $\psi_k:\mathbb{R}\to\Gamma_k$ such that $1^\top\psi_k(y)=y$ and $y\mapsto \mu_k(\psi_k(y))$ is concave. Then, the result follows by induction.
\end{proof}

The significance of this result is in reducing dimensionality in Player 0's  optimization problem at each time step of dynamic programming
 from potentially $N$-dimensional to one-dimensional. In particular, Player 0 can maximize any function $\phi:\Gamma_1\to\mathbb{R}$ by maximizing $\phi\circ\psi_1:\mathbb{R}\to\mathbb{R}$.

Practically, this result means that in the case of strictly concave utilities, not only is the HJB PDE only one-dimensional in space, but the maximization step for computing an optimal Hamiltonian at each time is also one-dimensional. This means that once $\psi_1$ is computed, Player 0's optimal dynamic contract can be constructed in time that does not depend upon the size of the hierarchy. This result makes the model tractable even for very large $N$ and contributes significantly to the tractability of using dynamic programming to design an optimal  contract in a large-scale principal-agent hierarchy.

\section{Example: Quadratic Utilities} \label{sec:ex}

In this section, we work through a specific example, where all agents have quadratic utilities. In particular, we set
\begin{equation} \nonumber
\begin{split}
r^k(a) &:= -\frac{1}{2}\alpha_k^2 a^2\\
q^k(r) &:= -\beta_k r\\
w_k & :=  0
\end{split}
\end{equation}
with $\beta_k\geq 0$ and $\alpha_k > 0$ for all $1\leq k\leq N$. Finally, we choose
\[q^0(r) := -\frac{1}{2}\alpha_0^2 r^2.\]
Economically, this means each player $1\leq k\leq N$ has an increasing aversion to putting in effort personally, which is governed by the parameter $\alpha_k$. Furthermore, each player $1\leq k\leq N$ has some aversion to paying the player below him in the hierarchy, which is governed by the parameter $\beta_k$. Player 0 is averse to uncertainty in the terminal payment to players below him.

In order to construct an optimal dynamic contract, we need to explicitly perform the iterative construction of Section 4. Because each utility function is assumed strictly concave, we can also apply the results of Section 5 to obtain a homeomorphism $\psi_k:\mathbb{R}\to\Gamma_k$ at each step. In the quadratic utility case, it will turn out that each $\Gamma_k$ will be a one-dimensional linear subspace. We will see that the iteration process only requires linear algebraic computations at each step.

\begin{theorem}There exists a strictly positive-definite, diagonal matrix $\Lambda_1$, a vector $g_1$ with $1^\top g_1 = 1$, and a real number $w_1^*$ such that it is equivalent for Player 0 to solve the following optimization problem:
\[\begin{array}{rl}
\sup\limits_{A \in \mathcal{A}, Y_1  \in \mathcal{Y}}&J^0[A,R_1] \\
\mbox{subject to} & A(t) \in\Gamma_1\subseteq\mathbb{R}^{N}, \quad t \in [0,T] \mbox{ a.e.}\\
& Y_1(t) \in(\partial\hat{\phi}_1)\left(A_1(t)+\cdots +A_{N}(t)\right) \quad t \in [0,T] \mbox{ a.e.},
\end{array}\]
with the compensation scheme given by
\[R_1 := w_1^* +\int_0^T\left(A_1+\cdots +A_{N}\right)Y_1-\mu_1\left(A\right)dt - \int_0^T Y_1 dX_1\]
where:
\begin{itemize}
\item $\mu_1(x) = -\frac{1}{2}x^\top\Lambda_1 x$,
\item $\hat{\phi_1}(y) = -\frac{1}{2}g_1^\top\Lambda_1 g_1 y^2$, so $\partial\hat{\phi}(y)=-g_1^\top\Lambda_1 g_1y$, and
\item $\Gamma_1$ is a one-dimensional subspace parametrized the isomorphism $\psi_1(y)=g_1 y$.\\
\end{itemize}
\end{theorem}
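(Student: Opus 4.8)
The plan is to invoke Theorem \ref{thm:Contract} to reduce Player 0's problem to the iterative reformulation, and then prove by downward induction on $k$ (from $k=N$ to $k=1$) that the quadratic structure is preserved at every stage. I would maintain three invariants: (i) $\mu_k(x)=-\tfrac12 x^\top\Lambda_k x$ for a positive-definite diagonal matrix $\Lambda_k$; (ii) the touching set $\Gamma_k$ is the one-dimensional subspace $\{g_k y : y\in\mathbb{R}\}$ for a vector $g_k$ with $1^\top g_k=1$, parametrized by the linear isomorphism $\psi_k(y)=g_k y$; and (iii) $\hat{\phi}_k(y)=-\tfrac12\bigl(g_k^\top\Lambda_k g_k\bigr)y^2$. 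The theorem is exactly the $k=1$ instance, with $w_1^*=\sum_{j=1}^N\gamma_{1,j}w_j=0$ reading off from the formula in Theorem \ref{thm:Contract} together with $w_k=0$. Since every $r^k$ is strictly concave, Theorem \ref{thm:homeo} already guarantees $\Gamma_1$ is a one-dimensional manifold; the new content here is that this manifold is \emph{linear} and that the cost and envelope are exact quadratics, which is what collapses the construction to linear algebra.

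The base case $k=N$ is immediate: $\mu_N=r^N$ is already strictly concave, so $\hat{\phi}_N=r^N$, $\Gamma_N=\mathbb{R}$, $\Lambda_N=[\alpha_N^2]$, $g_N=1$, and the invariants hold trivially. For the inductive step I would use the recursion $\mu_k(A_{k:N})=r^k(A_k)+\beta_k\mu_{k+1}(A_{k+1:N})$ from the proof of Theorem \ref{thm:Contract}. Substituting the inductive form of $\mu_{k+1}$ gives $\mu_k(x)=-\tfrac12 x^\top\Lambda_k x$ with $\Lambda_k=\mathrm{diag}(\alpha_k^2,\beta_k\Lambda_{k+1})$, which is diagonal and (for $\beta_k>0$) positive-definite. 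To compute the $(1^\top(\cdot),\,\mathbb{R}\times\Gamma_{k+1})$-concave envelope I would follow the three-step construction of Section \ref{sec:envelope}: parametrize the fiber $\{x\in\mathbb{R}\times\Gamma_{k+1}: 1^\top x=y\}$ by $z\mapsto(z,\psi_{k+1}(y-z))$, using $1^\top\psi_{k+1}(w)=w$, so that
\[
\phi_k(y)=\sup_{z\in\mathbb{R}}\left[-\tfrac12\alpha_k^2 z^2-\tfrac12\beta_k c_{k+1}(y-z)^2\right],\qquad c_{k+1}:=g_{k+1}^\top\Lambda_{k+1}g_{k+1}.
\]
This is a scalar strictly concave quadratic maximization whose unique maximizer is linear, $z^*(y)=\theta_k y$ with $\theta_k=\beta_k c_{k+1}/(\alpha_k^2+\beta_k c_{k+1})$, and whose value is the concave parabola $\phi_k(y)=-\tfrac12 c_k y^2$ with $c_k=\alpha_k^2\beta_k c_{k+1}/(\alpha_k^2+\beta_k c_{k+1})$.

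Because $\phi_k$ is \emph{already} concave, no concavification is needed and $\hat{\phi}_k=\phi_k$; this is precisely why the touching set equals the entire image of the maximizer rather than a proper subset. Reading off $g_k=(\theta_k,(1-\theta_k)g_{k+1})$ yields $1^\top g_k=\theta_k+(1-\theta_k)\cdot 1=1$, and a direct computation confirms $g_k^\top\Lambda_k g_k=c_k$, verifying invariant (iii) and closing the induction. The routine parts are the scalar optimization and the algebraic identity $g_k^\top\Lambda_k g_k=c_k$; the step that carries the real weight is the observation that $\phi_k$ stays concave without modification, so that $\Gamma_k$ is a full one-dimensional subspace (equivalently $\Gamma_k=\mathrm{Im}(\psi_k)$), which is inherited from the strict joint concavity hypothesis of Lemma \ref{lem:par}. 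I would finally flag the one positivity subtlety: strict positive-definiteness of $\Lambda_1$ requires $\beta_\ell>0$ for all $\ell$, since its $j$-th diagonal entry is $\gamma_{1,j}\alpha_j^2=\bigl(\prod_{\ell=1}^{j-1}\beta_\ell\bigr)\alpha_j^2$, and a vanishing $\beta_\ell$ degenerates the lower block and reduces the effective dimension.
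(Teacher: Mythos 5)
Your proposal is correct and follows essentially the same route as the paper's proof: a downward induction maintaining the quadratic form $\mu_k(x)=-\tfrac12 x^\top\Lambda_k x$, the linear parametrization $\psi_k(y)=g_k y$ with $1^\top g_k=1$, and the scalar quadratic maximization yielding $T_k(y)=\theta_k y$ and $\phi_k(y)=-\tfrac12 c_k y^2$. You supply a few details the paper leaves implicit (the closed form $c_k=\alpha_k^2\beta_k c_{k+1}/(\alpha_k^2+\beta_k c_{k+1})$, the identity $g_k^\top\Lambda_k g_k=c_k$, and the correct observation that strict positive-definiteness of $\Lambda_1$ requires $\beta_\ell>0$ rather than merely $\beta_\ell\geq 0$), but the argument is the same.
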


\begin{proof}
We follow the iteration argument of Theorems \ref{thm:Contract} and \ref{thm:homeo} to construct each $\mu_k$, $\Gamma_k$, and the homeomorphism $\psi_k:\mathbb{R}\to\Gamma_k$.
For Player $N$, we trivially have that $\mu_{N}(x)=-\frac{1}{2}\alpha_{N}^2x^2$, $\Gamma_{N}=\mathbb{R}$, and the homeomorphism is $\psi_{N}:\mathbb{R}\to\Gamma_{N}$ is the identity $\psi_{N}(y) = y$.

Consider Player $k$ with $k>N$. We know by Theorems \ref{thm:Contract} and \ref{thm:homeo} that there exist $\mu_{k+1}:\mathbb{R}^{N-k-1}\to\mathbb{R}$, $\Gamma_{k+1}\subseteq\mathbb{R}^{N-k-1}$, and a homeomorphism $\psi_{k+1}:\mathbb{R}\to\Gamma_{k+1}$ such that $1^\top\psi_{k+1}(y)=y$.

We suppose further that there exists a strictly positive-definite matrix $\Lambda_{k+1}$ and vector $g_{k+1}$ with the property $1^\top g_{k+1}=1$ such that $\mu_{k+1}(x) = -\frac{1}{2}x^\top\Lambda_{k+1}x$ and $\psi_{k+1}(y)=g_{k+1}y$.

Then by Theorem \ref{thm:Contract}, $\mu_k:\mathbb{R}^{N-k-1}\to\mathbb{R}$ can be written:
\[\mu_k(x,y) = r^k(x)+\beta_{k+1}\mu_{k+1}(y)=-\frac{1}{2}\left[\begin{array}{c}x\\y\end{array}\right]^\top\left[\begin{array}{cc}\alpha_k^2 & 0\\0 & \beta_{k+1}\Lambda_{k+1}\end{array}\right]\left[\begin{array}{c}x\\y\end{array}\right],\]
or in the form $\mu_k(x)=-\frac{1}{2}x^\top\Lambda_k x$ for a strictly positive-definite, diagonal matrix.

By Theorem \ref{thm:homeo}, we need to compute the functions $T_k:\mathbb{R}\to\mathbb{R}$, $\psi_k:\mathbb{R}\to\mathbb{R}^{N-k}$ and $\phi_k:\mathbb{R}\to\mathbb{R}$. We can compute:
\begin{equation}\nonumber
\begin{split}
T_k(y) & :=  \argmax_{z \in \mathbb{R}} \mu_k(z,\psi_{k+1}(y-z)) \nonumber\\
& \; =  \argmax_z\left[-\frac{1}{2}\alpha_k^2 z^2 - \frac{1}{2}\beta_{k+1}(y-z)^2g_{k+1}^\top\Lambda_{k+1}g_{k+1}\right]\nonumber\\
& \; =  \frac{\beta_{k+1} g_{k+1}^\top\Lambda_{k+1}g_{k+1}}{\alpha_k^2+\beta_{k+1} g_{k+1}^\top\Lambda_{k+1}g_{k+1}}y,\nonumber
\end{split}
\end{equation}
and then immediately, we have
\[\psi_k(y) := \left(T_k(y),\psi_{k+1}(y-T_k(y))\right) = g_k y\]
for a vector $g_k$ computable in terms of $T_k$ and $\psi_{k+1}$. Further, we have $y=1^\top\psi_k(y)=1^\top g_k y$, so $1^\top g_k=1$.

Lastly, we have
\[\phi_k(y) := \sup_{z \in \mathbb{R}} \mu_k(z,\psi_{k+1}(y-z)) = \mu_k(\psi_k(y)) = -\frac{1}{2}y^2 g_k^\top\Lambda_k g_k.\]

By induction, the result follows. We note that because $\psi_1:\mathbb{R}\to\Gamma_1$ is linear, then $\Gamma_1$ is a one-dimensional linear subspace.
\end{proof}

Then, as in Theorem~\ref{thm:HJB}, Player 0 can define a value function and characterize it as the unique viscosity solution to an HJB equation. However, in this case, the HJB equation simplifies considerably:
\begin{corollary}Define the value function $v$ for Player 0 as in Theorem \ref{thm:Contract}. Then $v$ is the unique viscosity solution to the Hamilton-Jacobi-Bellman equation:
\[v_t + \sup_{y\in\mathbb{R}}\left[\frac{\gamma}{2}(\gamma\sigma^2v_{\bm{w}\bm{w}}+v_{\bm{w}}) y^2 + y\right] = 0,\]
with terminal condition $v(\bm{w},T)=q^0(\bm{w})$ and $\gamma := g_1^\top\Lambda_1g_1$.\end{corollary}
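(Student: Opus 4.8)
The plan is to start from the general Hamilton--Jacobi--Bellman equation of Theorem~\ref{thm:HJB} and substitute the explicit quadratic data produced by the preceding theorem, thereby collapsing the constrained supremum over $\Gamma_1$ together with the super-gradient constraint into a single unconstrained one-dimensional supremum.

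First I would recall the structure supplied by the preceding theorem: there is a strictly positive-definite diagonal matrix $\Lambda_1$ and a vector $g_1$ with $1^\top g_1 = 1$ such that $\mu_1(x) = -\frac12 x^\top \Lambda_1 x$, the set $\Gamma_1$ is the image of the linear isomorphism $\psi_1(y) = g_1 y$, and $\hat{\phi}_1(s) = -\frac12 \gamma s^2$ with $\gamma := g_1^\top \Lambda_1 g_1$. The crucial consequence is that $\hat{\phi}_1$ is differentiable, so its super-gradient is single-valued: $\partial\hat{\phi}_1(s) = \{-\gamma s\}$. This single-valuedness is precisely what will let the inner optimization over $y$ in the general HJB collapse to a determined value rather than persist as a supremum over an interval.

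Next I would parametrize the inner maximization. Because $\psi_1$ is an isomorphism \emph{onto} $\Gamma_1$, maximizing over $a \in \Gamma_1$ is equivalent to maximizing over $a = g_1 s$ with $s \in \mathbb{R}$, losing no feasible efforts. For such $a$ one computes $1^\top a = (1^\top g_1)\,s = s$ and $\mu_1(a) = -\frac12 s^2\, g_1^\top \Lambda_1 g_1 = -\frac{\gamma}{2} s^2$, while the constraint $Y \in \partial\hat{\phi}_1(1^\top a) = \partial\hat{\phi}_1(s)$ forces $Y = -\gamma s$. Substituting these into the bracket of the HJB of Theorem~\ref{thm:HJB} gives
\[
\frac12 Y^2 \sigma^2 v_{\bm{w}\bm{w}} - \mu_1(a) v_{\bm{w}} + 1^\top a
= \frac12 \gamma^2 s^2 \sigma^2 v_{\bm{w}\bm{w}} + \frac{\gamma}{2} s^2 v_{\bm{w}} + s
= \frac{\gamma}{2}\bigl(\gamma \sigma^2 v_{\bm{w}\bm{w}} + v_{\bm{w}}\bigr) s^2 + s.
\]
Taking the supremum over $s \in \mathbb{R}$ and renaming $s$ to $y$ recovers exactly the asserted Hamiltonian, and the terminal condition $v(\bm{w},T) = q^0(\bm{w})$ is carried over unchanged.

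I do not anticipate a genuine obstacle here: the entire content is an algebraic rewriting enabled by the linearity of $\psi_1$ and the smoothness of $\hat{\phi}_1$. Since the resulting equation is literally the equation of Theorem~\ref{thm:HJB} written in the reparametrized variable, the characterization of $v$ as the unique viscosity solution is inherited directly from that theorem, requiring no new well-posedness argument. The only points demanding care are the two noted above --- surjectivity of $\psi_1$ onto $\Gamma_1$ and single-valuedness of $\partial\hat{\phi}_1$ --- both of which are immediate from the preceding theorem.
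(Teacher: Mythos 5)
Your proposal is correct and follows exactly the route the paper intends: the paper's own justification is the single remark that the corollary is ``just a particular case of Theorem~\ref{thm:HJB},'' and your argument simply carries out that specialization explicitly, substituting $a=g_1 s$, $1^\top a=s$, $\mu_1(a)=-\tfrac{\gamma}{2}s^2$, and $Y=-\gamma s$ into the general Hamiltonian. The two points you flag as needing care (surjectivity of $\psi_1$ onto $\Gamma_1$ and single-valuedness of $\partial\hat{\phi}_1$) are indeed the only substantive ones, and both are supplied by the preceding theorem.
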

The proof is just a particular case of Theorem~\ref{thm:HJB}. The significance is that, in this case, the HJB reduces to only one state variable and a one-dimensional maximization over $\mathbb{R}$ at each time.


\section*{Acknowledgement}

The authors would like to thank Professor Lawrence Craig Evans for helpful discussions on PDE approaches for principal-agent problems.
 
\bibliographystyle{siam}

\bibliography{hierarchy_contract}
\vfill\eject

\end{document}